\DeclareMathOperator{\Ker}{Ker}
\DeclareMathOperator{\RGB}{RGB}
\DeclareMathOperator{\UGB}{UGB}
\DeclareMathOperator{\Gr}{Gr}
\DeclareMathOperator{\supp}{supp}
\DeclareMathOperator{\Tor}{Tor}
\theoremstyle{plain}
\newtheorem{thm}{Theorem}[section]
\newtheorem{lemma}[thm]{Lemma}
\newtheorem{prop}[thm]{Proposition}
\newtheorem{cor}[thm]{Corollary}
\theoremstyle{definition}
\newtheorem{defin}[thm]{Definition}
\theoremstyle{remark}
\newtheorem{rem}[thm]{Remark}
\newtheorem{notat}[thm]{Notation}
\renewcommand{\labelenumi}{(\roman{enumi})}
\def\cocoa{{\hbox{\rm C\kern-.13em o\kern-.07em C\kern-.13em o\kern-.15em A}}}
\tikzset{commutative diagrams/.cd, arrow style=tikz, diagrams={>=angle 90, line width=tikzcdrule}} 
\setlist[itemize]{topsep=6.0pt plus 2.4pt minus 3.6pt, itemsep=3.0pt plus 1.5pt minus 0.6pt, parsep=3.0pt plus 1.5pt minus 0.6pt, leftmargin=29.37pt, listparindent=0.0pt, labelwidth=23.5pt}
\setlist[enumerate]{topsep=6.0pt plus 2.4pt minus 3.6pt, itemsep=3.0pt plus 1.5pt minus 0.6pt, parsep=3.0pt plus 1.5pt minus 0.6pt, leftmargin=29.37pt, listparindent=0.0pt, labelwidth=23.5pt}
\begin{document}
	\author{Alessio D'Al\`i}
	\address{Dipartimento di Matematica, Universit\`a degli Studi di Genova, Via Dodecaneso 35, 16146 Genova, Italy}
	\email{dali@dima.unige.it}
	\title{Toric ideals associated with gap-free graphs}
	\date{\today}

\begin{abstract}
In this paper we prove that every toric ideal associated with a gap-free graph $G$ has a squarefree lexicographic initial ideal. Moreover, in the particular case when the complementary graph of $G$ is chordal (i.e. when the edge ideal of $G$ has a linear resolution), we show that there exists a reduced Gr\"obner basis $\mathcal{G}$ of the toric ideal of $G$ such that all the monomials in the support of $\mathcal{G}$ are squarefree. Finally, we show (using work by Herzog and Hibi) that if $I$ is a monomial ideal generated in degree $2$, then $I$ has a linear resolution if and only if all powers of $I$ have linear quotients, thus extending a result by Herzog, Hibi and Zheng.
\end{abstract}

\keywords{Gap-free graph, Gr\"obner basis, toric ideal, linear resolution, linear quotients}
\subjclass[2010]{13P10, 05E40}

         \maketitle

\section{Introduction}
Algebraic objects depending on combinatorial data have attracted a lot of interest among both algebraists and combinatorialists: some valuable sources to learn about this research area are the books by Stanley \cite{Stanley}, Villarreal \cite{Villarreal}, Miller and Sturmfels \cite{MillerSturmfels}, and Herzog and Hibi \cite{HerzogHibi}.
It is often a challenge to establish relationships between algebraic and combinatorial properties of these objects.

Let $G$ be a simple graph and consider its vertices as variables of a polynomial ring over a field $K$. We can associate with each edge $e$ of $G$ the squarefree monomial $M_e$ of degree 2 obtained by multiplying the variables corresponding to the vertices of the edge. With this correspondence in mind, we can now introduce some algebraic objects associated with the graph $G$:
\begin{itemize}
\item the \emph{edge ideal} $I(G)$ is the monomial ideal generated by $\{M_e \mid e\textrm{ is an edge of }G\}$;
\item the \emph{toric ideal} $I_G$ is the kernel of the presentation of the $K$-algebra $K[G]$ generated by $\{M_e \mid e\textrm{ is an edge of }G\}$.
\end{itemize}
An important result by Fr\"oberg \cite{Fr} gives a combinatorial characterization of those graphs $G$ whose edge ideal $I(G)$ admits a linear resolution: they are exactly the ones whose complementary graph $G^c$ is chordal.
Another strong connection between the realms of commutative algebra and combinatorics is the one which links initial ideals of the toric ideal $I_G$ to triangulations of the edge polytope of $G$, see Sturmfels's book \cite{Sturmfels} and the recent article by Haase, Paffenholz, Piechnik and Santos \cite{HPPS}. Furthermore, Gr\"obner bases of $I_G$ have been studied among others by Ohsugi and Hibi \cite{OHToric} and Tatakis and Thoma \cite{TTUniversal}. A necessary condition for $I_G$ to have a squarefree initial ideal is the normality of $K[G]$, which was characterized combinatorially by Ohsugi and Hibi \cite{OHPolytopes} and Simis, Vasconcelos and Villarreal \cite{SVV}. Normality, though, is not sufficient: Ohsugi and Hibi \cite{OHStar} gave an example of a graph $G$ such that $K[G]$ is normal but all possible initial ideals of $I_G$ are not squarefree.

An interesting class of graphs is the one consisting of the so-called \emph{gap-free graphs} (following Dao, Huneke and Schweig's notation in \cite{DHS}), i.e. graphs such that any two edges with no vertices in common are linked by at least one edge. Unfortunately, these graphs do not have a standard name in the literature. Just to name a few possibilities:
\begin{itemize}
\item graph theorists refer to gap-free graphs as ``2$K_2$-free graphs'' and so do Hibi, Nishiyama, Ohsugi and Shikama in \cite{HNOS};
\item Nevo and Peeva call them ``$C_4$-free graphs'' in \cite{Nevo} and \cite{NevoPeeva};
\item Ohsugi and Hibi use the phrase ``graphs whose complement is weakly chordal'' in \cite{OHIndispensable};
\item Corso and Nagel call bipartite gap-free graphs ``Ferrers graphs'' in \cite{CorsoNagel}.
\end{itemize} 

The main goal of this paper is to prove that the toric ideal $I_G$ has a squarefree lexicographic initial ideal, provided the graph $G$ is gap-free (Theorem \ref{onesteplin}): moreover, the corresponding reduced Gr\"obner basis consists of circuits. In the particular case when $I(G)$ has a linear resolution (Theorem \ref{linres}) we are actually able to prove that the reduced Gr\"obner basis $\mathcal{G}$ we describe consists of circuits such that all monomials (both leading and trailing) in the support of $\mathcal{G}$ are squarefree, thus extending a result of Ohsugi and Hibi \cite{OHCompressed} on multipartite complete graphs.

\renewcommand{\labelenumi}{(\alph{enumi})}

In \cite{HHZ} Herzog, Hibi and Zheng proved that the following conditions are equivalent:
\begin{enumerate}
\item $I(G)$ has a linear resolution;
\item $I(G)$ has linear quotients;
\item $I(G)^k$ has a linear resolution for all $k \geq 1$.
\end{enumerate}
It is quite natural to ask (see for instance the article by Hoefel and Whieldon \cite{HW}) whether these conditions are in turn equivalent to the fact that \begin{enumerate}\item[(d)] $I(G)^k$ has linear quotients for all $k \geq 1$.\end{enumerate} In Theorem \ref{linchar} we prove that this is indeed the case, as can be deduced from results in \cite{HerzogHibi}. Note that all the equivalences between conditions (a), (b), (c), (d) above hold more generally for monomial ideals generated in degree 2 which are not necessarily squarefree.

The computer algebra system $\cocoa$ \cite{CoCoA} gave us the chance of performing computations which helped us to produce conjectures about the behaviour of the objects studied.

\renewcommand{\labelenumi}{(\roman{enumi})}

\section{Notation and known facts}
First of all, let us fix some notation.
$K$ will always be a field and $G$ a simple graph with vertices $V(G) = \{1, \ldots, n\}$ and edges $E(G) = \{e_1, \ldots, e_m\}$. We can associate to each edge $e = \{i, j\}$ the degree 2 monomial (called \emph{edge monomial}) $M_e := x_ix_j \in K[x_1, \ldots, x_n]$ and hence we can consider the \emph{edge ideal} $I(G) := (M_{e_1}, \ldots, M_{e_m})$ and the subalgebra $K[G] := K[M_{e_1}, \ldots, M_{e_m}]$. In the following we will denote by $I_G$ the \emph{toric ideal associated with} $G$, i.e. the kernel of the surjection
\[\begin{array}{ccc}  K[y_1, \ldots, y_m] & \twoheadrightarrow & K[G] \\ y_i & \mapsto & M_{e_i} \end{array}\]
Since the algebraic objects we defined are not influenced by isolated vertices of $G$, we will always assume without loss of generality that $G$ does not have any isolated vertex.
We will now introduce some terminology and state some well-known results about toric ideals of graphs: for reference, see for instance \cite[Section 10.1]{HerzogHibi}.

A collection of (maybe repeated) consecutive edges \[\Gamma = \{\{v_0, v_1\},\{v_1, v_2\}, \ldots, \{v_{q-1}, v_q\}\}\] (also denoted by $\{v_0 \rightarrow v_1 \rightarrow v_2 \rightarrow \ldots \rightarrow v_{q-1} \rightarrow v_q\}$) is called a \emph{walk} of $G$. If $v_0 = v_q$, the walk is \emph{closed}. If $q$ is even (respectively odd), the walk is an even (respectively odd) walk. A \emph{path} is a walk having all distinct vertices; a \emph{cycle} is the closed walk most similar to a path, i.e. such that vertices $v_0, \ldots, v_{q-1}$ are all distinct. A \emph{bow-tie} is a graph consisting of two vertex-disjoint odd cycles joined by a single path. Given a walk $\Gamma$, we will denote by $|\Gamma|$ the subgraph of $G$ whose vertices and edges are exactly the ones appearing in $\Gamma$.
If no confusion occurs, we will often write walks in more compact ways, such as by decomposing them into smaller walks. If $\Gamma$ is a walk, $-\Gamma$ denotes the walk obtained from $\Gamma$ by reversing the order of the edges.

If $\Gamma = \{\{v_0, v_1\},\{v_1, v_2\}, \ldots, \{v_{2q-1}, v_{2q}\}\}$ is an even closed walk, one can associate with $\Gamma$ a binomial $b_{\Gamma} \in K[y_1, \ldots, y_m]$ in the following way: \[b_{\Gamma} := \prod_{i=1}^q y_{\{v_{2i-2}, v_{2i-1}\}} - \prod_{i=1}^q  y_{\{v_{2i-1}, v_{2i}\}},\] where, if $e \in E(G)$, by $y_e$ we mean the variable which is mapped to $M_e$ by the standard surjection.
A \emph{subwalk} $\Gamma'$ of $\Gamma$ is an even closed walk such that all even edges of $\Gamma'$ are also even edges of $\Gamma$ and all odd edges of $\Gamma'$ are also odd edges of $\Gamma$.
An even closed walk $\Gamma$ is called \emph{primitive} if it does not have any proper subwalk. The set of binomials corresponding to primitive walks of a graph $G$ coincides with the so-called \emph{Graver basis} of $I_G$ (see for instance \cite{Sturmfels}) and is denoted by $\Gr_G$.  

\begin{rem} \label{primitivepaint}
Note that, given a primitive walk $\Gamma$, one can paint -- using two colours -- the edges of $|\Gamma|$ so that those appearing in an even position in $\Gamma$ are assigned the same colour and those appearing in an odd position are assigned the other one. If an edge were assigned both colours, then the walk $\Gamma$ would not be primitive: deleting inside both monomials one instance of the variable corresponding to that edge, one could construct a proper subwalk of $\Gamma$.
\end{rem}

The \emph{support} of a binomial $b = u-v \in I_G$ is the union of the supports of the monomials $u$ and $v$, that is to say the variables that appear in $u$ and $v$. A binomial $b \in I_G$ is called a \emph{circuit} if it is irreducible and has minimal support, i.e. there does not exist $b' \in I_G$ such that $\supp(b') \subsetneq \supp(b)$. The set of circuits of $I_G$ is denoted by $\textrm{C}_G$.

Let $I$ be an ideal of $S := K[x_1, \ldots, x_n]$. A Gr\"obner basis $\mathcal{G}$ of $I$ with respect to a term order $\tau$ is called \emph{reduced} if every element of $\mathcal{G}$ is monic, the leading terms of $\mathcal{G}$ minimally generate $in_{\tau}(I)$ and no trailing term of $\mathcal{G}$ lies in $in_{\tau}(I)$. Such a basis is unique and is denoted by $\RGB_{\tau}(I)$. Generally speaking, changing the term order $\tau$ yields a different reduced Gr\"obner basis: we will denote by $\UGB(I)$ the \emph{universal Gr\"obner basis} of $I$, i.e. the union of all reduced Gr\"obner bases of $I$.

\begin{prop}[{\cite[Proposition 4.11]{Sturmfels}}] \label{inclusions}
One has that $\emph{C}_G \subseteq \UGB(I_G) \subseteq \Gr_G$.
\end{prop}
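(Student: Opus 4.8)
The plan is to prove the two inclusions separately, leaning on the combinatorial descriptions already recorded: $\Gr_G$ is exactly the set of \emph{primitive} binomials of $I_G$, i.e. those $b = u - v$ for which there is no other binomial $u' - v' \in I_G$ with $u' \mid u$ and $v' \mid v$ and at least one division proper, while a circuit is an irreducible binomial of minimal support. Throughout I will use that $in_{\tau}(I_G)$ is a monomial ideal, so it is closed under taking multiples, and that for $b \in \RGB_{\tau}(I_G)$ with $u = in_{\tau}(b)$ the monomial $u$ is a minimal generator of $in_{\tau}(I_G)$ while every trailing monomial of $b$ avoids $in_{\tau}(I_G)$.

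For $\UGB(I_G) \subseteq \Gr_G$ I would check that every $b = u - v \in \RGB_{\tau}(I_G)$ is primitive. Suppose not, and let $u' - v' \in I_G$ witness the failure, with $u' \mid u$, $v' \mid v$ and at least one proper division. If the $\tau$-leading monomial of this binomial is $v'$, then $v' \in in_{\tau}(I_G)$ and $v' \mid v$ force $v \in in_{\tau}(I_G)$, contradicting reducedness. If instead the leading monomial is $u'$, then $u' \in in_{\tau}(I_G)$ divides the minimal generator $u$, so $u' = u$; the proper division is then $v' \mid v$, and subtracting the witnessing binomial $u - v'$ from $b$ yields $v' - v$, a nonzero element of $I_G$ both of whose monomials are divisors of $v$ and hence lie outside $in_{\tau}(I_G)$, which is impossible. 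Hence $b$ is primitive, so $b \in \Gr_G$.

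The substantive half, and the step I expect to be the real obstacle, is $\mathrm{C}_G \subseteq \UGB(I_G)$: for a given circuit $b = u - v$ I must manufacture a term order realising $b$ in a reduced Gr\"obner basis. Writing $\sigma = \supp(b) \subseteq \{1, \ldots, m\}$, minimality of the support means that the lattice of integer relations supported on $\sigma$ has rank one and is generated by the (primitive) exponent vector of $b$; this is the crux, and it yields that the restricted toric ideal $I_G \cap K[y_i : i \in \sigma]$ equals the principal ideal $(b)$. I would then choose $\tau$ to be any elimination order for the complementary variables $\{y_j : j \notin \sigma\}$ — one in which every monomial involving an outside variable exceeds every monomial in the $y_i$ with $i \in \sigma$ — refined arbitrarily among the variables of $\sigma$, so that $u = in_{\tau}(b)$.

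To conclude I would verify the two conditions that force $b$ into $\RGB_{\tau}(I_G)$. First, $v \notin in_{\tau}(I_G)$: otherwise some $b'' = u'' - w'' \in I_G$ with leading monomial $u'' \mid v$ exists, whence $\supp(u'') \subseteq \sigma$; the elimination property of $\tau$ then forces $\supp(w'') \subseteq \sigma$ as well, so $b'' \in I_G \cap K[y_i : i \in \sigma] = (b)$, giving $u = in_{\tau}(b) \mid u'' \mid v$ and hence $u \mid v$, which is absurd since $b$ is irreducible. The identical argument applied to proper divisors of $u$ shows that $u$ is a minimal generator of $in_{\tau}(I_G)$. These two facts identify $b$ as the (unique, by reducedness) basis element with leading term $u$, so $b \in \RGB_{\tau}(I_G) \subseteq \UGB(I_G)$. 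The delicate points are the rank-one principality of the restricted ideal and the bookkeeping by which the elimination order confines all relevant binomials to the variables of $\sigma$; the rest is formal.
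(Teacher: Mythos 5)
The paper offers no proof of this proposition to compare against: it is quoted as a known result from Sturmfels's book (Proposition 4.11 there), with the citation serving as the proof. What you have written is, in effect, a reconstruction of the standard argument, and it is correct. For $\UGB(I_G) \subseteq \Gr_G$, your two-case analysis is exactly the usual one: if the witness $u'-v'$ has leading term $v'$, then $v \in in_{\tau}(I_G)$ contradicts reducedness; if its leading term is $u'$, minimality of the generator $u$ forces $u'=u$, and then $v'-v$ is a nonzero element of $I_G$ with neither monomial in $in_{\tau}(I_G)$, which is impossible. For $\mathrm{C}_G \subseteq \UGB(I_G)$ you correctly identify and exploit the crux: circuit minimality makes the lattice of relations supported on $\sigma$ rank one, generated by the primitive exponent vector of $b$, whence $I_G \cap K[y_i : i \in \sigma] = (b)$, and an elimination order for the outside variables then pins $b$ into $\RGB_{\tau}(I_G)$ via your two verifications ($v \notin in_{\tau}(I_G)$ and $u$ a minimal generator). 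Three points are asserted rather than proved, all standard and easily supplied: (1) that a monomial in $in_{\tau}(I_G)$ admits a binomial witness in $I_G$, i.e. reduced Gr\"obner bases of toric ideals consist of binomials; (2) the rank-one claim itself (any relation $d$ supported in $\sigma$ has full support by minimality, and then $c_i d - d_i c$ has smaller support, hence vanishes, so $d$ is an integer multiple of the primitive vector $c$); (3) that a term order of the required shape with $in_{\tau}(b) = u$ exists, which holds because the two monomials of a circuit have disjoint supports and equal degrees --- the same facts that make your final conclusion $u \mid v$ absurd. Your closing sentence also compresses the last identification: to be explicit, let $g = u - v'$ be the reduced basis element with leading term $u$; then $b - g = v' - v$ lies in $I_G$ with both monomials outside $in_{\tau}(I_G)$, hence is zero --- the same subtraction trick you already used in the first half. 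With those routine details filled in, your proof is complete and self-contained, which is more than the paper itself provides.
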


The second inclusion of Proposition \ref{inclusions} means that every reduced Gr\"obner basis $\mathcal{G}$ of $I_G$ consists of binomials coming from primitive walks of $G$. Consider the set of monomials (both leading and trailing) in such a basis: if they are all squarefree, we will say that $\mathcal{G}$ is \emph{doubly squarefree}.

Complete characterizations of both $\textrm{C}_G$ (Villarreal \cite{VillarrealCircuits}) and $\UGB(I_G)$ (Tatakis and Thoma \cite{TTUniversal}) are known. We recall the characterization of $\textrm{C}_G$ (using the phrasing in Ohsugi and Hibi's article \cite{OHCircuits}) as a reference.

\renewcommand{\labelenumi}{\arabic{enumi}.}

\begin{prop} \label{circuits}
A binomial $b \in I_G$ is a circuit of $G$ if and only if $b = b_{\Gamma}$, where $\Gamma$ is one of the following even closed walks:
\begin{enumerate}
\item an even cycle;
\item $\{C_1, C_2\}$ where $C_1$ and $C_2$ are odd cycles with exactly one common vertex;
\item $\{C_1, p, C_2, -p\}$ where $C_1$ and $C_2$ are vertex-disjoint odd cycles and $p$ is a path running from a vertex of $C_1$ to a vertex of $C_2$.
\end{enumerate}
\end{prop}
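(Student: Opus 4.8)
The plan is to reduce the statement to a purely matroidal classification. Over $\mathbb{Q}$ the toric ideal $I_G$ is attached to the unsigned vertex--edge incidence matrix $A$ of $G$, whose column for $e = \{i,j\}$ is $a_e = \varepsilon_i + \varepsilon_j \in \mathbb{Q}^{V(G)}$. By the general theory of circuits of toric ideals (see \cite[Chapter 4]{Sturmfels}), the support of any circuit of $I_G$ is a minimal linearly dependent set of columns of $A$, that is, a circuit of the vector matroid of $A$; conversely each such minimal dependent set $\sigma$ carries a kernel vector $c \in \Ker_{\mathbb{Q}} A$ which is unique up to scaling and has full support $\sigma$ (were $c_e = 0$ for some $e\in\sigma$, a proper subset of $\sigma$ would already be dependent). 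Clearing denominators and splitting $c$ into its positive and negative parts then produces the circuit binomial supported on $\sigma$, and its two monomials are automatically coprime, since every edge of $\sigma$ receives a nonzero, hence definitely signed, coefficient; this is the linear-algebra counterpart of the two-colouring in Remark \ref{primitivepaint}, and it also gives irreducibility. So it suffices to classify the circuits of the vector matroid of $A$ and to read off the corresponding kernel vector in each case.

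The engine for this classification is the independence criterion for the columns of $A$: a set of edges is independent if and only if each connected component of the subgraph it spans contains at most one cycle and every such cycle is odd. I would deduce this from the rank formula $\operatorname{rank} A_H = \sum_i (|V(H_i)| - \varepsilon_i)$, where $H_1,\dots,H_s$ are the connected components of a subgraph $H$ and $\varepsilon_i = 1$ if $H_i$ is bipartite and $\varepsilon_i = 0$ otherwise. The inequality $\operatorname{rank} A_{H_i} \leq |V(H_i)| - 1$ for a bipartite component comes from the functional $\mathbb{1}_X - \mathbb{1}_Y$ (with $X, Y$ the colour classes), which kills every $a_e$; equality, and the non-bipartite case, follow by exhibiting a spanning tree and, when present, one odd cycle as independent columns. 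Since cardinality and rank are both additive over connected components, a minimal dependent set lies inside a single component, so from now on I may assume $|\sigma|$ is connected.

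It remains to list the connected, minimally dependent subgraphs $|\sigma|$. Because every vertex $v$ satisfies the balance equation $\sum_{e \ni v} c_e = 0$ with all $c_e \neq 0$, a vertex of degree $1$ would force its single incident coefficient to vanish; hence $|\sigma|$ has minimum degree at least $2$. If $|\sigma|$ is bipartite, the criterion makes it dependent as soon as it contains a cycle, and minimality forces $|\sigma|$ to be a single (necessarily even) cycle --- case (1). If $|\sigma|$ is non-bipartite, a first Betti number equal to $1$ gives a lone odd cycle, which is independent, while a Betti number at least $3$ is non-minimal; so the Betti number equals $2$, and with minimum degree at least $2$ the graph $|\sigma|$ is a theta graph or two cycles joined by a path of length $\geq 0$. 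The theta graph is excluded: the lengths of its three constituent cycles have even total parity, so one of them is even, and that even cycle is a strictly smaller dependent set. What survives is two cycles meeting in exactly one vertex --- case (2) --- and two vertex-disjoint cycles joined by a path --- case (3); in both, the absence of a proper dependent subset forces each cycle to be odd.

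Finally I would solve for the kernel vector $c$ in each surviving case and match it with the even closed walk defining $b_\Gamma$. Around an even cycle, $c$ alternates between $+1$ and $-1$; for two odd cycles sharing a vertex, the two edges at the common vertex force $c \equiv +1$ on one cycle and $\equiv -1$ on the other, so the shared vertex balances; for the path case, balancing the endpoints propagates weight $\pm 2$ along the connecting path while the cycle edges keep weight $\pm 1$. Reading $b_\Gamma$ off these coefficients reproduces precisely the three families, and the weights explain why the first two are doubly squarefree whereas the third carries squares exactly on the connecting path. I expect the main obstacle to be the non-bipartite half of the classification: establishing that minimum degree at least $2$ together with Betti number $2$ leaves only the theta and dumbbell shapes, and then using the parity obstruction both to discard the theta graph and to force oddness of the two remaining cycles.
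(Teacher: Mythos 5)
The paper does not actually prove Proposition \ref{circuits}: it quotes it as a known result of Villarreal \cite{VillarrealCircuits}, in the phrasing of Ohsugi and Hibi \cite{OHCircuits}. So there is no in-paper proof to compare against; what you have written is a self-contained substitute, and its skeleton is sound. The route --- identify supports of toric circuits with circuits of the vector matroid of the incidence matrix via \cite[Chapter 4]{Sturmfels}, prove the rank/independence criterion (a component is independent over $\mathbb{Q}$ iff it has at most one cycle and that cycle is odd), force minimum degree $2$ from the full-support kernel vector, and classify connected minimally dependent graphs by first Betti number --- is essentially the standard linear-algebra argument behind Villarreal's theorem. Your bookkeeping there is correct: $\beta=1$ with minimum degree $2$ is a single cycle (independent when odd), $\beta\geq 3$ lets you delete an edge and remain dependent, $\beta=2$ with minimum degree $2$ gives exactly theta, figure-eight or dumbbell, and the parity of the sum of the three cycle lengths of a theta graph is a clean way to discard it.

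Three repairs are needed, one of them a genuine error. First, your kernel vector in case (2) is wrong as stated: a vector equal to $+1$ on all of $C_1$ and $-1$ on all of $C_2$ violates the balance equation at every degree-$2$ vertex (two incident edges of equal sign sum to $\pm 2$, not $0$), and the binomial it would produce, $\prod_{e\in C_1}y_e-\prod_{e\in C_2}y_e$, does not even lie in $I_G$, since its two monomials map to $\prod_{v\in V(C_1)}x_v^2$ and $\prod_{v\in V(C_2)}x_v^2$. The correct vector alternates $\pm 1$ around each cycle; because the cycles are odd, the two edges of $C_1$ at the shared vertex both get $+1$ and the two edges of $C_2$ both get $-1$, so the degree-$4$ vertex balances, and the resulting binomial is exactly $b_\Gamma$ for the walk $\{C_1,C_2\}$ (note both of its monomials mix edges of both cycles). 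Second, coprimality of the two monomials does not by itself give irreducibility ($y_1^2-y_2^2$ is coprime-supported but reducible); you need primitivity of the integer kernel vector --- here each cycle edge carries coefficient $\pm 1$, so the binomial is linear in such a variable and Gauss's lemma finishes. Third, since the statement is an equivalence, you should say explicitly that each of the three configurations is \emph{minimally} dependent: dependence comes from your kernel vector, and independence of every proper subset is immediate from your criterion, because deleting any edge leaves components that are at worst unicyclic with an odd cycle. With these fixes your argument is complete.
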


\begin{defin}
Let $I \subseteq S := K[x_1, \ldots, x_n]$ be a graded ideal generated in degree $d$.
\begin{itemize}
\item If the minimal free resolution of $I$ as an $S$-module is linear until the $k$-th step, i.e. $\Tor^S_i(I, K)_j = 0$ for all $i \in \{0, \ldots, k\}$, $j \neq i+d$, we say that $I$ is \emph{k-step linear}.
\item If $I$ is $k$-step linear for every $k \geq 1$, we say $I$ has a \emph{linear resolution}.
\item If $I$ is minimally generated by $f_1, \ldots, f_s$ and for every $1 < i \leq s$ one has that $(f_1, f_2, \ldots, f_{i-1}) :_S (f_i)$ is generated by elements of degree 1, then $[f_1, \ldots, f_s]$ is called a \emph{linear quotient ordering} and $I$ is said to have \emph{linear quotients}.
\item If $I = I(G)$ for some graph $G$ and $I$ has one of the properties above, we say that $G$ has that property.
\end{itemize}
\end{defin}

\begin{prop}[{\cite[Proposition 8.2.1]{HerzogHibi}}] \label{linquotres}
Let $I \subseteq K[x_1, \ldots, x_n]$ be a graded ideal generated in degree $d$. Then
\[I\textrm{ has linear quotients }\Rightarrow I\textrm{ has a linear resolution.}\]
\end{prop}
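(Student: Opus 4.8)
The plan is to prove the implication by induction on the number of minimal generators, converting a linear quotient ordering into a chain of short exact sequences and then tracking graded Betti numbers through the associated long exact sequences in $\Tor$. First I would fix a linear quotient ordering $f_1, \ldots, f_s$ of the minimal generators of $I$ (all of degree $d$) and set $I_i := (f_1, \ldots, f_i)$, so that $I_s = I$ and, for each $i > 1$, the colon ideal $(I_{i-1} :_S f_i)$ is generated in degree $1$. Multiplication by $f_i$ then produces, for each $i$, a short exact sequence of graded modules
\[0 \longrightarrow \bigl(S/(I_{i-1} :_S f_i)\bigr)(-d) \xrightarrow{\;\cdot f_i\;} S/I_{i-1} \longrightarrow S/I_i \longrightarrow 0,\]
where injectivity on the left is exactly the defining property of the colon ideal and the shift by $-d$ accounts for the degree of $f_i$.

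Next I would pin down the two flanking resolutions. The base case is immediate: $I_1 = (f_1)$ is principal, so $S/I_1$ has a linear resolution. For the colon quotient, since $(I_{i-1} :_S f_i)$ is generated by linear forms, a linear change of coordinates turns it into $(x_1, \ldots, x_r)$, whose minimal free resolution is the Koszul complex; hence $S/(I_{i-1} :_S f_i)$ has a linear resolution, with $\Tor^S_k\bigl(S/(I_{i-1} :_S f_i), K\bigr)_j = 0$ unless $j = k$. The twist by $-d$ moves this nonvanishing line to $j = k + d$.

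The heart of the matter is the long exact sequence in $\Tor$ attached to the displayed short exact sequence. Assuming inductively that $S/I_{i-1}$ has a linear resolution, so that $\Tor^S_k(S/I_{i-1}, K)_j = 0$ unless $j = k + d - 1$ for $k \geq 1$, I would observe that at each homological degree $k \geq 1$ the two neighbours of $\Tor^S_k(S/I_i, K)_j$ in the long exact sequence, namely $\Tor^S_k(S/I_{i-1}, K)_j$ and $\Tor^S_{k-1}\bigl((S/(I_{i-1}:_S f_i))(-d), K\bigr)_j$, are each nonzero only on the single line $j = k + d - 1$. Exactness then forces $\Tor^S_k(S/I_i, K)_j = 0$ off this line, which is precisely linearity of the resolution of $S/I_i$, and hence of $I_i$. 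Iterating up to $i = s$ gives the statement.

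The delicate point is the degree bookkeeping that makes both flanking $\Tor$-modules land on the same line $j = k + d - 1$: the contribution of $S/I_{i-1}$ sits there by induction, while the colon quotient sits on $j = k$ before twisting and is pushed to $j = (k-1) + d = k + d - 1$ by the degree-$d$ shift together with the homological drop of one in the connecting map. The case $k = 1$, where the degree-zero $\Tor$ of the colon quotient must be matched against the first syzygies of $S/I_{i-1}$ in degree $d$, is exactly where an off-by-one slip could hide, so I would verify it by hand. As an equivalent route kept in reserve, one can instead build the resolution of $S/I_i$ as the iterated mapping cone of the comparison maps between these linear resolutions; there the corresponding subtlety is to check that the (possibly non-minimal) mapping cone still carries all of its twists on the linear strand.
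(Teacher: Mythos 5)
Your proof is correct. Note that the paper itself does not prove this statement at all: it simply cites \cite[Proposition 8.2.1]{HerzogHibi}, so there is no internal argument to compare against. Your argument --- induction on the number of generators in a linear quotient ordering, the short exact sequence
\[0 \longrightarrow \bigl(S/(I_{i-1} :_S f_i)\bigr)(-d) \xrightarrow{\;\cdot f_i\;} S/I_{i-1} \longrightarrow S/I_i \longrightarrow 0,\]
the Koszul resolution of the linear colon ideal, and the long exact sequence in $\Tor$ squeezing $\Tor_k(S/I_i,K)_j$ between two modules concentrated on the line $j=k+d-1$ --- is essentially the standard proof given in that reference (which Herzog and Hibi phrase via iterated mapping cones, your reserve route). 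The degree bookkeeping you flag as delicate checks out, including $k=1$, where $\Tor_0$ of the twisted colon quotient and the first syzygies of $S/I_{i-1}$ both sit in degree $d$.
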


\renewcommand{\labelenumi}{(\alph{enumi})}

We now recall an important result by Herzog, Hibi and Zheng (\cite{HHZ}) about the connection between linear quotients and linear resolution in the case when $I$ is a monomial ideal generated in degree 2. Condition (d) below did not appear in the original paper: its equivalence to other conditions, though, can be obtained quickly using results in \cite{HerzogHibi}.

\begin{thm} \label{linchar}
Let $I \subseteq K[x_1, \ldots, x_n] $ be a monomial ideal generated in degree 2. Then the following conditions are equivalent:
\begin{enumerate}
\item $I$ has a linear resolution;
\item $I$ has linear quotients;
\item $I^k$ has a linear resolution for all $k \geq 1$;
\item $I^k$ has linear quotients for all $k \geq 1$.
\end{enumerate}
\begin{proof}
The implications $(c) \Rightarrow (a)$ and $(d) \Rightarrow (b)$ are obvious, while $(b) \Rightarrow (a)$ and $(d) \Rightarrow (c)$ follow from Proposition \ref{linquotres}. It is then enough to prove that $(a) \Rightarrow (d)$, but this follows at once from \cite[Theorems 10.1.9 and 10.2.5]{HerzogHibi} (since the lexicographic order $<_{\textrm{lex}}$ introduced in Theorem 10.2.5 is of the kind appearing in Theorem 10.1.9).
\end{proof}
\end{thm}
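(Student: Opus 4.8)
The plan is to close the cycle of implications among (a)--(d) and isolate the single substantive arrow. The four statements organise themselves cheaply: specialising $k=1$ gives $(c)\Rightarrow(a)$ and $(d)\Rightarrow(b)$ for free, while Proposition \ref{linquotres} (linear quotients force a linear resolution) yields $(b)\Rightarrow(a)$ and, applied to each power, $(d)\Rightarrow(c)$. Thus the equivalence of all four will follow as soon as I produce one implication feeding back into (d); the natural choice is $(a)\Rightarrow(d)$, since then $(a)\Rightarrow(d)\Rightarrow(c)\Rightarrow(a)$ together with $(d)\Rightarrow(b)\Rightarrow(a)$ makes every statement equivalent. So the entire content of the theorem reduces to the claim: if a degree-$2$ monomial ideal $I$ has a linear resolution, then $I^k$ has linear quotients for every $k\ge 1$.

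For this core implication I would first pin down the combinatorial meaning of ``$I$ has a linear resolution.'' Writing the generators as $x_ix_j$ (allowing $i=j$, so that non-squarefree generators $x_i^2$ are permitted), $I$ is encoded by a graph, possibly with loops; in the squarefree case Fr\"oberg's theorem \cite{Fr} says that a linear resolution is equivalent to chordality of the complementary graph, and I would either reduce the general case to this one (for instance by polarisation) or reprove the analogous elimination-order statement directly. The payoff of chordality is a distinguished total order on the variables---a perfect elimination ordering of the complement---which I would promote to a lexicographic term order $<_{\mathrm{lex}}$ on $K[x_1,\dots,x_n]$.

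The technical heart, and the step I expect to be hardest, is to show that with respect to this $<_{\mathrm{lex}}$ the (finite) set of degree-$2k$ generators of $I^k$ admits a linear quotient ordering for every $k$ simultaneously. The strategy is to verify that the generators of $I$ satisfy a combinatorial exchange property (a sortability / $\ell$-exchange condition) tailored to $<_{\mathrm{lex}}$, to check that this property is inherited by all products---so that it holds for the generators of each $I^k$---and then to invoke the general principle that generation in a single degree together with such an exchange property forces linear quotients with respect to $<_{\mathrm{lex}}$; this is exactly the content packaged in \cite[Theorems 10.1.9 and 10.2.5]{HerzogHibi}, where one only needs to confirm that the lex order supplied is compatible with the one required by the exchange machinery. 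The delicate point is the colon computation $(g_1,\dots,g_{i-1}):(g_i)$ for the ordered generators of $I^k$: one must show each such colon is generated by variables, which comes down to an explicit exchange argument manipulating the multidegrees of two $2k$-fold products using the perfect elimination order, and it is the uniformity in $k$ (rather than any fixed power) that makes this the crux. Finally, I would note that nothing in the argument uses squarefreeness beyond the degree-$2$ hypothesis, so the conclusion holds for arbitrary monomial ideals generated in degree $2$.
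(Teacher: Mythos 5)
Your proposal is correct and takes essentially the same route as the paper: the identical cheap reductions (specializing $k=1$, and Proposition \ref{linquotres} applied to $I$ and to each power) isolate $(a) \Rightarrow (d)$ as the only substantive implication, which you then discharge---exactly as the paper does---by appealing to \cite[Theorems 10.1.9 and 10.2.5]{HerzogHibi}. Your additional sketch of chordality, perfect elimination orderings, and the sortability/$\ell$-exchange machinery is an unpacking of what those cited theorems already contain, not a different argument.
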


\begin{rem} \label{linquotedges}
Theorem 10.2.5 and the proof of Theorem 10.1.9 in \cite{HerzogHibi} (or, as an alternative, just the proof of the implication $(a) \Rightarrow (b)$ in Theorem 10.2.6) tell us also that, if $I$ is a monomial ideal of degree 2 having a linear resolution and $\{m_1, m_2, \ldots, m_s\}$ is a minimal set of monomial generators for $I$, then there exists a permutation $\sigma$ of $\{1, \ldots, s\}$ such that $[m_{\sigma(1)}, m_{\sigma(2)}, \ldots, m_{\sigma(s)}]$ is a linear quotient ordering for $I$. As a consequence, if $I$ is the edge ideal of some graph $G$ having a linear resolution, there exists a way of ordering the edge monomials so that they form a linear quotient ordering.
\end{rem}

We thank Aldo Conca for pointing out the following result:

\begin{prop} \label{linquot}
Let $f_1, \ldots, f_s$ be distinct homogeneous elements of degree $d$ in $S := K[x_1, \ldots, x_n]$ which are minimal generators for the ideal $(f_1, \ldots, f_s)$. The following conditions are equivalent:
\begin{enumerate}
\item $[f_1, \ldots, f_s]$ is a linear quotient ordering;
\item the ideal $(f_1, \ldots, f_i)$ is 1-step linear for all $i \leq s$.
\end{enumerate}

\begin{proof}
Let us prove that $(a) \Rightarrow (b)$. Let $i \leq s$. If $[f_1, \ldots, f_s]$ is a linear quotient ordering, than $[f_1, \ldots, f_i]$ is too and hence, by Proposition \ref{linquotres}, the ideal $(f_1, \ldots, f_i)$ has a linear resolution; in particular, it is 1-step linear.

To prove that $(b) \Rightarrow (a)$, let $i \in \{2, \ldots, s\}$. Consider the exact sequence \[0 \to \Ker\varepsilon \to S(-d)^i \xrightarrow{\varepsilon} (f_1, \ldots, f_i) \to 0,\] where $\varepsilon$ is the map which sends $e_j$ to $f_j$ for all $j \in \{1, \ldots, i\}$. Then, by hypothesis, $\Ker\varepsilon$ is generated in degree 1. Since $(f_1, \ldots, f_{i-1}) :_S (f_i)$ is isomorphic to the $i$-th projection of $\Ker\varepsilon$, we are done.
\end{proof}
\end{prop}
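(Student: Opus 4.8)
The plan is to prove the two implications separately, exploiting the dictionary between the colon ideals appearing in a linear quotient ordering and the first syzygy module of the truncated ideals $I_i := (f_1, \ldots, f_i)$. The definition of $1$-step linearity is purely homological (vanishing of $\Tor^S_0$ and $\Tor^S_1$ off the diagonal), whereas linear quotients is phrased in terms of colon ideals; the whole argument consists in matching these two descriptions through the minimal presentation of $I_i$.

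For $(a) \Rightarrow (b)$ I would first observe that a prefix of a linear quotient ordering is again one: if $(f_1, \ldots, f_{j-1}) :_S (f_j)$ is generated in degree $1$ for every $j \leq s$, then the same holds for every $j \leq i$, so $[f_1, \ldots, f_i]$ is a linear quotient ordering of $I_i$. By Proposition \ref{linquotres} this forces $I_i$ to have a linear resolution, which is in particular $1$-step linear. This direction is essentially formal.

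The substance is in $(b) \Rightarrow (a)$, where I would translate the homological hypothesis into a statement about syzygies. Fix $i \in \{2, \ldots, s\}$ and consider the presentation $\varepsilon \colon S(-d)^i \twoheadrightarrow I_i$ sending the $j$-th basis vector to $f_j$. Because the $f_j$ are minimal generators of $I_i$, this presentation is minimal, so the minimal generators of $\Ker\varepsilon$ are governed in number and internal degree by $\Tor^S_1(I_i, K)$. Being $1$-step linear means $\Tor^S_1(I_i, K)_j = 0$ for $j \neq d+1$, and since $\Ker\varepsilon$ sits inside $S(-d)^i$ this says precisely that $\Ker\varepsilon$ is generated in degree $1$: a minimal syzygy of internal degree $d+1$ has entries of degree $1$.

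The final step is the identification of the colon ideal with a coordinate projection of the syzygy module. I would note that $g \in (f_1, \ldots, f_{i-1}) :_S (f_i)$ if and only if $g f_i \in (f_1, \ldots, f_{i-1})$, i.e. there exist $h_1, \ldots, h_{i-1}$ with $\sum_{j<i} h_j f_j - g f_i = 0$; this says exactly that $(h_1, \ldots, h_{i-1}, -g) \in \Ker\varepsilon$. Hence projecting $\Ker\varepsilon$ onto its $i$-th coordinate yields the colon ideal up to sign, and since a graded surjection preserves generation in degree $1$, the colon ideal is generated in degree $1$; letting $i$ range over $\{2, \ldots, s\}$ gives (a). The point needing the most care, and the \emph{main obstacle}, is the minimality of the presentation: this is what upgrades the $\Tor^S_1$-vanishing to the statement that the \emph{whole} module $\Ker\varepsilon$ is generated in degree $1$, rather than merely containing some degree-$1$ syzygies, and it relies precisely on the hypothesis that the $f_j$ form a minimal generating set so that no cancellation occurs between $\Tor^S_0$ and $\Tor^S_1$.
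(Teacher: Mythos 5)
Your proposal is correct and follows essentially the same route as the paper: the implication $(a) \Rightarrow (b)$ via truncating the ordering and invoking Proposition \ref{linquotres}, and $(b) \Rightarrow (a)$ via the minimal presentation $0 \to \Ker\varepsilon \to S(-d)^i \to (f_1,\ldots,f_i) \to 0$ together with the identification of the colon ideal $(f_1,\ldots,f_{i-1}) :_S (f_i)$ with the $i$-th coordinate projection of $\Ker\varepsilon$. The only difference is that you spell out the details the paper leaves implicit (minimality of the presentation, hence the match between $\Tor_1$ and the generators of $\Ker\varepsilon$, and the explicit syzygy computation), which is a faithful elaboration rather than a different argument.
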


In what follows, we will denote by $G^c$ the \emph{complementary graph} of $G$, i.e. the graph which has the same vertex set of $G$ and whose edges are exactly the non-edges of $G$.

The next result by Eisenbud, Green, Hulek and Popescu proves that, in our context, the algebraic concept of $k$-step linearity can be characterized in a purely combinatorial manner.

\begin{prop}[{\cite[Theorem 2.1]{EGHP}}] \label{EGHPprop}
Let $G$ be a graph and let $k \geq 1$. The following conditions are equivalent:
\begin{itemize}
\item $G$ is $k$-step linear;
\item $G^c$ does not contain any induced cycle of length $i$ for any $4 \leq i \leq k+3$.
\end{itemize}
\end{prop}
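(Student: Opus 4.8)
The plan is to translate both conditions into simplicial homology by means of Hochster's formula, thereby reducing the statement to a topological vanishing result for flag complexes, and then to establish that result by an induction on the number of vertices.

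First I would identify $S/I(G)$ with the Stanley--Reisner ring $K[\Delta]$ of the independence complex $\Delta$ of $G$: a vertex set $W$ is a face of $\Delta$ exactly when it is independent in $G$, i.e.\ when it spans a clique in $H := G^c$, so $\Delta$ is the clique (flag) complex of $H$ and, for every $W \subseteq V$, the induced subcomplex $\Delta_W$ is the clique complex $\mathrm{Clique}(H_W)$ of the induced subgraph $H_W$. Since $I(G)$ is generated in degree $2$ and $\Tor^S_i(I(G),K)_j = \Tor^S_{i+1}(S/I(G),K)_j$ in the relevant degrees, unwinding the definition shows that $G$ is $k$-step linear if and only if $\beta_{i,j}(S/I(G)) = 0$ for all $1 \le i \le k+1$ and all $j \neq i+1$. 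Applying Hochster's formula
\[ \beta_{i,j}(K[\Delta]) = \sum_{|W| = j} \dim_K \widetilde{H}_{j-i-1}(\Delta_W; K) \]
and substituting $d = j-i-1$, $j = |W|$, this becomes the condition that $\widetilde{H}_d(\Delta_W; K) = 0$ for every $W$ and every $d \ge 1$ with $|W| \le d + k + 2$. It then remains to prove that this homological condition is equivalent to $H$ having no induced cycle of length between $4$ and $k+3$.

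One direction is immediate. If $H$ contains an induced cycle of length $\ell$ with $4 \le \ell \le k+3$, I take $W$ to be its vertex set; since the cycle is induced and $\ell \ge 4$, the complex $\Delta_W$ is the $1$-dimensional cycle, homotopy equivalent to $S^1$, so $\widetilde{H}_1(\Delta_W;K) \neq 0$. This term contributes to $\beta_{\ell-2,\ell}(S/I(G))$, and since $1 \le \ell-2 \le k+1$ and $\ell \neq \ell-1$ it obstructs $k$-step linearity; there is no cancellation, as Hochster's formula is a sum of nonnegative dimensions.

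The substance is the converse, and this is the step I expect to be the main obstacle: assuming $H$ has no induced cycle of length between $4$ and $k+3$, I must show $\widetilde{H}_d(\Delta_W;K) = 0$ whenever $d \ge 1$ and $|W| \le d+k+2$. As induced subgraphs inherit the cycle restriction, it suffices to treat $W = V(H)$ and argue by induction on $n := |V(H)|$, proving $\widetilde{H}_d(\mathrm{Clique}(H);K) = 0$ for $d \ge \max(1, n-k-2)$. When $n \le k+3$ the hypothesis leaves no room for any induced cycle of length $\ge 4$, so $H$ is chordal and its clique complex is contractible on each connected component (remove a simplicial vertex repeatedly); this settles the base cases, where vanishing is required for all $d \ge 1$. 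For $n \ge k+4$ I fix a vertex $v$ and use that $\mathrm{Clique}(H) = \operatorname{star}(v) \cup \mathrm{Clique}(H-v)$, with $\operatorname{star}(v)$ a contractible cone and intersection $\operatorname{lk}(v) = \mathrm{Clique}(H_{N(v)})$, the clique complex of the subgraph induced on the neighbourhood of $v$; the reduced Mayer--Vietoris sequence gives the exact segment
\[ \widetilde{H}_d(\mathrm{Clique}(H-v)) \to \widetilde{H}_d(\mathrm{Clique}(H)) \to \widetilde{H}_{d-1}(\mathrm{Clique}(H_{N(v)})), \]
and since the required range is now $d \ge n-k-2 \ge 2$, the inductive hypothesis applied to the smaller graphs $H-v$ and $H_{N(v)}$ kills both outer terms. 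The crux is precisely to make the numerology of the bound $|W| \le d+k+2$ match the drop in vertex count under passing to $H-v$ and to the neighbourhood $H_{N(v)}$, and to confirm that the regime not reached by the induction is exactly the chordal one, where the topology is under control.
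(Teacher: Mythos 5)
Your argument is correct, but there is nothing in the paper to compare it against: the paper states this proposition purely as a citation of Eisenbud--Green--Hulek--Popescu (their Theorem 2.1) and supplies no proof of its own, so what you have produced is a self-contained proof of an imported result. Your route --- identifying the independence complex $\Delta$ of $G$ with the clique complex of $H = G^c$, using Hochster's formula to translate $k$-step linearity into the vanishing condition $\widetilde{H}_d(\Delta_W;K)=0$ for all $d\ge 1$ and $|W|\le d+k+2$, and then proving the hard implication by induction on $|V(H)|$ via the Mayer--Vietoris sequence for $\mathrm{Clique}(H)=\operatorname{star}(v)\cup \mathrm{Clique}(H-v)$ --- is the standard simplicial-homology proof of this characterization, close in spirit to Fr\"oberg's original argument for the chordal case, and considerably more elementary than the EGHP source, where the statement sits inside a general theory of $N_{2,p}$ conditions and restriction of linear syzygies for arbitrary projective subschemes. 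Your numerology is sound at every step: in the easy direction an induced $\ell$-cycle ($4\le \ell\le k+3$) contributes to $\beta_{\ell-2,\ell}(S/I(G))$ with no possible cancellation, since Hochster's formula is a sum of nonnegative dimensions; in the converse, the base case $n\le k+3$ is exactly the chordal regime, where contractibility of each component of the clique complex follows by repeatedly deleting a simplicial vertex (Dirac's theorem provides one, its link is a full simplex, and deleting it preserves connectivity), and for $n\ge k+4$ the required range $d\ge n-k-2\ge 2$ guarantees that both outer Mayer--Vietoris terms vanish by induction, because $H-v$ has $n-1$ vertices and $H_{N(v)}$ at most $n-1$, so $d\ge (n-1)-k-2$ and $d-1\ge \max(1,\,|N(v)|-k-2)$. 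In a final write-up you should make explicit the two facts used as black boxes --- Hochster's formula and the contractibility of clique complexes of chordal graphs --- and perhaps remark that the degenerate case of an isolated vertex $v$ (empty link) is harmless since the relevant homological degree $d-1$ is at least $1$; but these are presentational points, not gaps.
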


As a corollary, we recover the important result by Fr\"oberg characterizing combinatorially graphs with a linear resolution.

\begin{cor} [\cite{Fr}]
Let $G$ be a graph. Then $G$ has a linear resolution if and only if $G^c$ is chordal, i.e. $G^c$ does not contain any induced cycle of length greater than or equal to $4$. 
\end{cor}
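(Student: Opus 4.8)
The plan is to obtain this statement as an immediate consequence of Proposition \ref{EGHPprop} by letting the parameter $k$ range over all positive integers. First I would unwind the two relevant definitions: by definition, $G$ has a linear resolution exactly when $G$ is $k$-step linear for \emph{every} $k \geq 1$, and $G^c$ is chordal exactly when $G^c$ contains no induced cycle of length $i$ for any $i \geq 4$. With these reformulations in hand the corollary becomes a purely bookkeeping exercise in quantifiers.

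Next, I would apply Proposition \ref{EGHPprop} for each fixed $k$, which states that $G$ is $k$-step linear if and only if $G^c$ has no induced cycle of length $i$ with $4 \leq i \leq k+3$. Taking the conjunction over all $k \geq 1$, the left-hand side becomes precisely the assertion that $G$ has a linear resolution, while on the right-hand side the nested ranges $[4, k+3]$ exhaust, as $k$ grows without bound, every integer $i \geq 4$. Hence the conjunction of the right-hand conditions says exactly that $G^c$ contains no induced cycle of length $\geq 4$, which is the definition of chordality. This establishes both implications at once.

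There is essentially no obstacle here; the only point requiring a moment's care is the direction of the quantifiers. Because a linear resolution demands $k$-step linearity for all $k$ simultaneously, one must take the \emph{intersection} of the forbidden-cycle conditions rather than their union, and it is precisely this intersection --- taken over the increasing family of intervals $4 \leq i \leq k+3$ --- that rules out cycles of every length $\geq 4$ and thereby yields the chordality of $G^c$.
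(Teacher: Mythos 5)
Your proposal is correct and follows exactly the route the paper intends: the corollary is stated as an immediate consequence of Proposition \ref{EGHPprop}, obtained by letting $k$ range over all positive integers and using the paper's definition of a linear resolution as $k$-step linearity for every $k \geq 1$. The quantifier bookkeeping you spell out is precisely the (implicit) argument in the paper, so there is nothing to add.
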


Following the notation in \cite{DHS}, we will call a graph $G$ \emph{gap-free} if for any $\{v_1, v_2\}$, $\{w_1, w_2\}$ in $E(G)$ (where $v_1, v_2, w_1, w_2$ are all distinct) there exist $i, j \in \{1, 2\}$ such that $\{v_i, w_j\} \in E(G)$. In other words, in a gap-free graph any two edges with no vertices in common are linked by at least a bridge.

\begin{rem} \label{gapfreeonesteplin}
It is easy to see that $G$ is gap-free if and only if $G^c$ does not contain any induced cycle of length 4. It then follows from Proposition \ref{EGHPprop} that $G$ is gap-free if and only if $G$ is 1-step linear.
\end{rem}

The following theorem holds more generally for affine semigroup algebras.

\renewcommand{\labelenumi}{\arabic{enumi}.}

\begin{thm} \label{hocstu}
Let $G$ be a graph.
\begin{enumerate}
\item \emph{(Hochster \cite{Hochster})} If $K[G]$ is normal, then it is Cohen-Macaulay.
\item \emph{(Sturmfels \cite[Proposition 13.15]{Sturmfels})} If $I_G$ admits a squarefree initial ideal with respect to some term order $\tau$, then $K[G]$ is normal (and hence Cohen-Macaulay).
\end{enumerate}
\end{thm}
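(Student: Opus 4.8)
The plan is to recognise that both assertions are instances of general results about affine semigroup algebras, specialised to the edge ring. Concretely, if $A_G = \{a_{e_1}, \ldots, a_{e_m}\} \subseteq \mathbb{Z}^n$ denotes the set of incidence (exponent) vectors of the edges, then $K[G] \cong K[\mathbb{N}A_G]$ is the affine semigroup algebra of $Q := \mathbb{N}A_G$, naturally graded by $\mathbb{Z}^n$. Part (1) is then exactly Hochster's theorem that a \emph{normal} affine semigroup algebra is Cohen--Macaulay, and part (2) is Sturmfels's criterion. Since both are classical, the shortest route is simply to cite \cite{Hochster} and \cite[Proposition 13.15]{Sturmfels}; below I indicate how I would reconstruct the arguments.

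For part (1), I would establish Cohen--Macaulayness through a computation of local cohomology. Writing $\mathfrak{m}$ for the graded maximal ideal of $K[Q]$ and $d = \dim K[Q]$, the idea is to describe each $\mathbb{Z}^n$-graded piece of $H^i_{\mathfrak{m}}(K[Q])$ via the reduced (co)homology of the polyhedral complex cut out on the boundary of the cone $\mathbb{R}_{\geq 0} Q$ by the faces meeting a fixed multidegree. Normality of $Q$ -- i.e. the saturation equality $Q = \mathbb{Z}Q \cap \mathbb{R}_{\geq 0} Q$ -- guarantees that the relevant complexes are acyclic except in top dimension, so that $H^i_{\mathfrak{m}}(K[Q]) = 0$ for $i < d$; by Grothendieck's characterisation of Cohen--Macaulayness through local cohomology this gives the claim. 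An alternative I would keep in reserve is to exhibit $K[Q]$ as a pure (module-direct) summand of a polynomial ring and invoke the fact that such summands of regular rings are Cohen--Macaulay, but the local cohomology argument is more self-contained here.

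For part (2), I would argue through the dictionary between initial ideals and triangulations of the edge polytope $P_G = \mathrm{conv}(A_G)$. A term order $\tau$ determines a regular triangulation $\Delta_\tau$ of $P_G$ whose Stanley--Reisner ideal equals $\sqrt{\mathrm{in}_{\tau}(I_G)}$; the hypothesis that $\mathrm{in}_{\tau}(I_G)$ is squarefree means it already equals its own radical, hence equals this Stanley--Reisner ideal, which forces $\Delta_\tau$ to be \emph{unimodular} (each maximal cell is spanned by a subset of $A_G$ generating the lattice of the cone it determines). Normality then follows from a direct lattice-point argument: given $b \in \mathbb{Z}A_G \cap \mathbb{R}_{\geq 0} A_G$, locate $b$ in the cone over some maximal simplex $\sigma$ of $\Delta_\tau$ and write it as a nonnegative real combination of the vertices of $\sigma$; unimodularity of $\sigma$ forces the coefficients to be nonnegative integers, so $b \in \mathbb{N}A_G$. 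Hence $Q$ is saturated, i.e. $K[G]$ is normal, and part (1) then yields Cohen--Macaulayness.

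The main obstacle is concentrated in part (1): the local cohomology vanishing -- Hochster's theorem proper -- is the genuinely technical step, requiring the careful identification of the graded pieces of $H^i_{\mathfrak{m}}$ with cone cohomology and the acyclicity forced by saturation. By contrast, part (2) is essentially bookkeeping once the triangulation correspondence is in place, the only delicate point being the passage from ``squarefree initial ideal'' to ``unimodular triangulation,'' which is precisely where the squarefree hypothesis is used. In the paper itself I would simply record both facts with their attributions, since complete reconstructions are available in the cited sources.
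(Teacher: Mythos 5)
Your proposal matches the paper exactly: Theorem \ref{hocstu} is stated there purely as a recollection of known results, with no proof beyond the citations to Hochster and to \cite[Proposition 13.15]{Sturmfels}, which is precisely the route you recommend. Your supplementary sketches (graded local cohomology of the affine semigroup ring for part (1), and the squarefree-initial-ideal/unimodular-regular-triangulation dictionary with the lattice-point argument for part (2)) are faithful to the standard arguments in the cited sources, so nothing is missing.
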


\renewcommand{\labelenumi}{(\roman{enumi})}

The problem of normality of graph algebras (and, as a consequence, of edge ideals, see \cite[Corollary 2.8]{SVV}) was addressed and completely solved by Ohsugi and Hibi \cite{OHPolytopes} and Simis, Vasconcelos and Villarreal \cite{SVV}.
One of the main results they found is the following:

\begin{thm}
A connected graph $G$ is such that $K[G]$ is normal if and only if $G$ satisfies the \emph{odd cycle condition}, i.e. for every couple of disjoint minimal odd cycles $\{C_1, C_2\}$ in $G$ there exists an edge linking $C_1$ and $C_2$.
\end{thm}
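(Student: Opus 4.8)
The plan is to pass from the ring $K[G]$ to its underlying affine semigroup and to use the standard criterion that the semigroup ring $K[G]=K[S_G]$, with $S_G=\mathbb{N}\mathcal{A}_G$ and $\mathcal{A}_G=\{\,e_i+e_j : \{i,j\}\in E(G)\,\}\subseteq\mathbb{Z}^n$, is normal if and only if $S_G=\mathbb{Z}\mathcal{A}_G\cap\mathbb{R}_{\ge 0}\mathcal{A}_G$; here $e_1,\dots,e_n$ is the standard basis of $\mathbb{Z}^n$. First I would record the lattice: since $G$ is connected, $e_i-e_k\in\mathbb{Z}\mathcal{A}_G$ whenever $i$ and $k$ are joined by a walk of even length, so if $G$ is moreover non-bipartite one gets $\mathbb{Z}\mathcal{A}_G=\{x\in\mathbb{Z}^n:\sum_v x_v\text{ is even}\}$. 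Under this dictionary a lattice point $\alpha\in\mathbb{Z}_{\ge 0}^n$ lies in $S_G$ exactly when the monomial $\prod_v x_v^{\alpha_v}$ is a product of edge monomials, i.e.\ when $\alpha$ is the degree vector of some multiset of edges of $G$.

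For the direction ``normal $\Rightarrow$ odd cycle condition'' I would argue by contraposition and exhibit an explicit hole of $S_G$. If the condition fails, pick vertex-disjoint minimal odd cycles $C_1,C_2$ with no edge between them and set $\alpha=\sum_{v\in C_1\cup C_2}e_v$. Summing the edge vectors around each cycle shows $2\alpha\in S_G$, so $\alpha\in\mathbb{R}_{\ge 0}\mathcal{A}_G$; moreover $\sum_v\alpha_v=|C_1|+|C_2|$ is even, so $\alpha\in\mathbb{Z}\mathcal{A}_G$ by the description above. On the other hand, writing $\alpha$ as a nonnegative integer combination of edge vectors would amount to a perfect matching of the induced subgraph on $C_1\cup C_2$; as there are no edges between the two parts and each $|C_i|$ is odd, no such matching exists, so $\alpha\notin S_G$. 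This point is precisely the obstruction encoded by the type~3 circuits of Proposition~\ref{circuits}.

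The substantive direction is ``odd cycle condition $\Rightarrow$ normal'', and this is where the main work lies. Take $\alpha\in\mathbb{Z}\mathcal{A}_G\cap\mathbb{R}_{\ge 0}\mathcal{A}_G$; I want $\alpha\in S_G$. Membership in the cone means the linear system $\sum_{e\ni v}\lambda_e=\alpha_v$, $\lambda\ge 0$, is feasible, so I would choose a \emph{basic} feasible solution $\lambda^\ast$. The key combinatorial input is the classical structure of such vertices for a graph incidence matrix: a set of edges is linearly independent precisely when every connected component of the corresponding subgraph contains at most one cycle and that cycle is odd, from which one deduces that $2\lambda^\ast$ is integral and that the fractional edges $\{e:\lambda^\ast_e\notin\mathbb{Z}\}$ form pairwise vertex-disjoint odd cycles $D_1,\dots,D_r$, each carrying weight $\tfrac12$. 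Splitting off the integral part (which already lies in $S_G$), the remaining defect is $\sum_k\delta_k$ with $\delta_k=\sum_{v\in D_k}e_v$, and the hypothesis that $\sum_v\alpha_v$ is even forces $r$ to be even.

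It then remains to integralize the defect in pairs, and this is exactly where the odd cycle condition enters. For a pair $D_i,D_j$, choose a chordless odd cycle inside each (a shortest odd cycle within each vertex set is chordless, hence minimal); these are disjoint, so the odd cycle condition yields an edge $f=\{p,q\}$ with $p\in V(D_i)$ and $q\in V(D_j)$. Deleting $p$ from the odd cycle $D_i$ leaves an even path with a perfect matching $M_i$, and likewise $D_j-q$ has a perfect matching $M_j$; then $M_i\cup M_j\cup\{f\}$ covers $V(D_i)\cup V(D_j)$ exactly once, giving $\delta_i+\delta_j\in S_G$. Summing over the pairs shows $\alpha\in S_G$, completing the proof. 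I expect the delicate point to be the half-integrality statement for basic solutions and the verification that the fractional support is a disjoint union of odd cycles; once that structural fact is in hand, the odd cycle condition plugs in cleanly through the matching-plus-bridge reassembly. (In the bipartite case there are no odd cycles, every basic solution is already integral by total unimodularity of the incidence matrix, and the condition holds vacuously, so the same argument applies.)
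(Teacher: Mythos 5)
The paper does not actually prove this statement: it records it as a known theorem of Ohsugi--Hibi \cite{OHPolytopes} and Simis--Vasconcelos--Villarreal \cite{SVV}, with no argument given, so your proposal can only be measured against those external sources. Your proof is correct, and it is essentially a self-contained reconstruction of the semigroup-theoretic approach: normality is tested via the criterion $S_G=\mathbb{Z}\mathcal{A}_G\cap\mathbb{R}_{\ge 0}\mathcal{A}_G$, the indicator vector $\sum_{v\in C_1\cup C_2}e_v$ of two unlinked disjoint odd cycles is a genuine hole (it lies in the lattice and the cone, but a representation in $S_G$ would be a perfect matching of an odd-order graph), and sufficiency reduces to pairing up odd-cycle defects. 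What you do differently from the original papers is to import the key structural step from combinatorial optimization: taking a basic feasible solution of $\{\lambda\ge 0,\ A\lambda=\alpha\}$ and using the classical fact that independent column sets of a graph incidence matrix span subgraphs whose components are trees or unicyclic with an odd cycle, which delivers half-integrality and the disjoint-odd-cycle fractional support in one stroke (provable by leaf-stripping plus the alternating-sum formula on an odd cycle); Ohsugi and Hibi instead argue directly on semigroup elements. Three details deserve attention in a write-up. First, the fractional edges carry half-odd-integer weights, not weight exactly $\tfrac12$; your phrase is literally wrong but harmless, since you immediately split off the integral part $\lfloor\lambda^\ast\rfloor$ and only the residue is $\tfrac12$ per edge. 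Second, your insertion of chordless odd cycles inside each $V(D_k)$ before invoking the odd cycle condition is not cosmetic but necessary, because the condition as stated applies only to \emph{minimal} cycles and the cycles $D_k$ arising from a basis need not be induced in $G$; a shortest odd cycle in $G[V(D_k)]$ is indeed chordless, and the bridging edge's endpoints still land in $V(D_i)$ and $V(D_j)$, which is all the matching construction needs. Third, the parity argument (the number $r$ of fractional cycles is even) relies on the lattice description $\mathbb{Z}\mathcal{A}_G=\{x\in\mathbb{Z}^n:\sum_v x_v\ \text{even}\}$, which is exactly where connectedness and non-bipartiteness enter; your separate disposal of the bipartite case by total unimodularity closes that gap. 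The net effect of your route is that the only nontrivial combinatorial input is a standard linear-programming fact, at the mild cost of leaving the purely semigroup-theoretic framework of the cited proofs.
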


Ohsugi and Hibi \cite{OHStar} also found an example of a graph $G$ such that $K[G]$ is normal but $in_{\tau}(I_G)$ is not squarefree for every choice of $\tau$, hence the condition in Theorem \ref{hocstu}.2 is sufficient but not necessary.

\begin{rem}
There is a strong connection between squarefree initial ideals of $I_G$ and unimodular regular triangulations of the edge polytope of $G$. To get more information about this topic, see \cite{Sturmfels} and the recent work \cite{HPPS}, in particular Section 2.4.
\end{rem}

\section{Results}
We start by stating a result about the shape of primitive walks. This is a modification of \cite[Lemma 2.1]{OHToric}: note that primitive walks were completely characterized by Reyes, Tatakis and Thoma in \cite[Theorem 3.1]{RTT} and by Ogawa, Hara and Takemura in \cite[Theorem 1]{OHT}. In the rest of the paper we will often talk of primitive walks of type (i), (ii), (iii) referring to the classification below.

\begin{lemma}\label{primitivewalks}
Let $\Gamma$ be a primitive walk. Then $\Gamma$ is one of these:
\begin{enumerate}
\item an even cycle;
\item $\{C_1, C_2\}$ where $C_1$ and $C_2$ are odd cycles with exactly one common vertex;
\item $\{C_1, p_1, C_2, p_2, \ldots, C_h, p_h\}$ where the $p_i$'s are paths of length greater than or equal to one and the $C_i$'s are odd cycles such that $C_{i \pmod h}$ and $C_{i+1 \pmod h}$ are vertex-disjoint for every $i$.
\end{enumerate}
\begin{proof}
Let $\Gamma$ be a primitive walk neither of type (i) nor (ii). Since $\Gamma$ is primitive, there exists a cycle $C_1$ inside $\Gamma$ (otherwise $\Gamma = \{p, -p\}$ where $p$ is a path and hence all edges of $p$ would appear both in odd and even position in $\Gamma$, thus violating the primitivity); moreover, since $\Gamma$ is not of type (i), $C_1$ has to be odd. Let $C_1 = \{v_1 \rightarrow v_2 \rightarrow \ldots \rightarrow v_{2k+1} \rightarrow v_1\}$; then \[\Gamma = \{v_1 \rightarrow v_2 \rightarrow \ldots \rightarrow v_{2k+1} \rightarrow v_1=u_0 \rightarrow u_1 \rightarrow u_2 \rightarrow u_3 \rightarrow \ldots\}.\]
Let $s \geq 1$ be the least integer such that $u_s$ coincides with one of the vertices in $\{u_0 = v_1, v_2, \ldots, v_{2k+1}, u_1, \ldots, u_{s-1}\}$.
\begin{itemize}
\item Suppose $u_s = v_i$ where $i \neq 1$. If $s = 1$ and $i \in \{2, 2k+1\}$, we get that the edge $\{v_1, v_i\}$ is both an even and an odd edge of $\Gamma$ (contradiction). In all other cases, paint the edges appearing in $\Gamma$ red and black alternately and note that, since $i \neq 1$, there are both a red and a black edge of $C_1$ starting from $v_i$. Then exactly one of $\{v_1 = u_0 \rightarrow u_1 \rightarrow \ldots \rightarrow u_{s-1} \rightarrow u_s = v_i \rightarrow v_{i+1} \rightarrow \ldots \rightarrow v_{2k+1} \rightarrow v_1 = u_0\}$ and $\{v_1 = u_0 \rightarrow u_1 \rightarrow \ldots \rightarrow u_{s-1} \rightarrow u_s = v_i \rightarrow v_{i-1} \rightarrow \ldots \rightarrow v_2 \rightarrow v_1 = u_0\}$ is an even closed subwalk, thus violating the primitivity of $\Gamma$. This gives us a contradiction.
\item Suppose $u_s = u_i$ where $i \in \{0, \ldots, s-2\}$ (since $G$ has no loops, $i \neq s-1$). Note that one actually has that $i < s-2$, since $i = s-2$ would imply that the edge $\{u_{s-1}, u_s\}$ is both an even and an odd edge of $\Gamma$ (contradiction). Therefore there exists a cycle $C_2 = \{u_s = u_i \rightarrow u_{i+1} \rightarrow \ldots \rightarrow u_{s-1} \rightarrow u_s = u_i\}$ disjoint from $C_1$ by construction. Since $\Gamma$ is primitive, $C_2$ must be odd; moreover, since $\Gamma$ is not of type (ii), one has that $i \neq 0$. This means that we have found a path $p_1 = \{u_0 \rightarrow u_1 \rightarrow \ldots \rightarrow u_i\}$ linking the odd cycles $C_1$ and $C_2$. We can now repeat the whole procedure starting from the cycle $C_2$ to find a path $p_2$ and an odd cycle $C_3$ disjoint from $C_2$ and so on, hence proving the claim in a finite number of steps. \qedhere
\end{itemize}
\end{proof}
\end{lemma}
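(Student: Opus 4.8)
The plan is to argue by a direct structural analysis of $\Gamma$, assuming it is neither of type (i) nor of type (ii) and showing it must then be of type (iii). The starting observation is that $\Gamma$ must contain at least one cycle: otherwise the underlying subgraph $|\Gamma|$ would be a tree traversed back and forth, forcing $\Gamma = \{p, -p\}$ for some path $p$, and then every edge of $p$ would occur in both an even and an odd position, contradicting the two-colouring of Remark \ref{primitivepaint}. Fix such a cycle $C_1$; I claim it can be taken odd, since an even cycle occurring as a proper closed subwalk of $\Gamma$ would violate primitivity, while if it were all of $\Gamma$ we would be in case (i).

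Next I would write $\Gamma$ as $C_1$ followed by a continuation $u_0 \to u_1 \to u_2 \to \cdots$ starting at a vertex of $C_1$, and let $s$ be the least index for which $u_s$ revisits a previously seen vertex. The core of the argument is a case distinction on the location of $u_s$. If $u_s$ lies on $C_1$, then by splitting $C_1$ at that vertex into its two arcs (which carry opposite colours in the sense of Remark \ref{primitivepaint}) one of the two resulting closed walks is even; this is a proper subwalk of $\Gamma$, contradicting primitivity. Hence $u_s$ must coincide with an earlier $u_i$ on the newly traced portion, producing a second cycle $C_2$ that is vertex-disjoint from $C_1$ by the minimality of $s$. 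Primitivity again forces $C_2$ to be odd, and the assumption that $\Gamma$ is not of type (ii) forces the connecting segment $p_1 = \{u_0 \to \cdots \to u_i\}$ to have length at least one.

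From here the idea is to iterate: restarting the same tracing procedure from $C_2$ produces a path $p_2$ and a further odd cycle $C_3$ vertex-disjoint from $C_2$, and so on. Because $|\Gamma|$ is finite and each step consumes new edges, the process terminates, and the closedness of $\Gamma$ forces the last path $p_h$ to return to $C_1$, yielding the cyclic pattern $\{C_1, p_1, C_2, p_2, \ldots, C_h, p_h\}$ with consecutive cycles vertex-disjoint, which is exactly type (iii).

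The delicate point — and the step I expect to cost the most care — is the case $u_s \in C_1$: one has to track the two colours around $C_1$ and along the path precisely enough to guarantee that exactly one of the two closed walks obtained by cutting at $u_s$ is even, and to rule out the degenerate situations (for instance $s=1$ with $u_s$ adjacent to the basepoint on $C_1$) in which the same edge would be forced to carry both colours. Once this parity bookkeeping is pinned down, the vertex-disjointness of consecutive cycles and the termination of the iteration follow cleanly from the minimality of $s$ at each stage.
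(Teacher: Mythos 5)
Your proposal follows essentially the same route as the paper's own proof: find an odd cycle $C_1$ via the $\{p,-p\}$ exclusion, trace the continuation $u_0 \to u_1 \to \cdots$ to the first repeated vertex $u_s$, rule out $u_s \in C_1$ by the red/black parity argument of Remark \ref{primitivepaint}, obtain a vertex-disjoint odd cycle $C_2$ joined by a path of positive length otherwise, and iterate. The delicate parity bookkeeping you flag (including the degenerate case $s=1$ with $u_s$ adjacent to the basepoint) is exactly what the paper's proof spells out, so your plan is correct and matches it step for step.
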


\begin{rem}
The referee noted that an alternative proof of Lemma \ref{primitivewalks} may be given using \cite[Theorem 1]{OHT}.
\end{rem}

\begin{rem}
Note that, by Proposition \ref{circuits}, all binomials corresponding to primitive walks of type (i) and (ii) are circuits.
\end{rem}

\renewcommand{\labelenumi}{(\alph{enumi})}

\begin{notat}
Let $G$ be a graph with $m$ edges and let $\tau$ be a term ordering on $K[y_1, \ldots, y_m]$. With a slight abuse of notation, we will often say that $e \preceq_{\tau} e'$ instead of $y_e \preceq_{\tau} y_{e'}$ (where $e, e' \in E(G)$). Moreover, if $H$ is a subgraph of $G$ and $\tau$ is lexicographic, we will say that $e \in E(H)$ is the \emph{leading edge} of $H$ with respect to $\tau$ if $y_{e'} \preceq_{\tau} y_e$ for every $e' \in E(H)$.
\end{notat}

Next we introduce the main technical lemma of the paper. Note that, when dealing with the vertices $v_1, \ldots, v_s$ of a cycle, for the sake of simplicity we will often write $v_i$ instead of $v_{i \pmod s}$.

\begin{lemma}\label{mainlemma}
Let $\Gamma$ be a primitive closed walk of $G$ of type (iii) and let $\tau$ be a lexicographic term order on $K[y_1, \ldots, y_m]$. Let $e$ be the leading edge of $|\Gamma|$ with respect to $\tau$: by Lemma \ref{primitivewalks}, $e$ lies into a bow-tie $\{C_1, p, C_2\}$.
Let $C_1 = \{v_1 \rightarrow v_2 \rightarrow \ldots \rightarrow v_{2k+1} \rightarrow v_1\}$, $C_2 = \{v'_1 \rightarrow v'_2 \rightarrow \ldots \rightarrow v'_{2{\ell}+1} \rightarrow v'_1\}$ and let $v_1$ and $v'_1$ be the starting and ending vertices of the path $p$.
Suppose one of the following two conditions holds:
\begin{enumerate}
\item $e \in p$ and there exist $i, j$ such that $\tilde{e} := \{v_i, v'_j\} \in E(G)$, $\tilde{e} \prec_{\tau} e$, $\tilde{e} \neq \{v_1, v'_1\}$;
\item $e = \{v_i, v_{i+1}\}$ and there exists $j$ such that at least one between $\{v_i, v'_j\}$ and $\{v_{i+1}, v'_j\}$ is an edge of $G$ (call it $\tilde{e}$) such that $\tilde{e} \prec_{\tau} e$ and $\tilde{e} \neq \{v_1, v'_1\}$.
\end{enumerate}
Then $b_{\Gamma} \notin \RGB_{\tau}(I_G)$.
\begin{proof}
First of all, by Remark \ref{primitivepaint} the primitivity of the walk $\Gamma$ allows us to paint the edges of $|\Gamma|$ red and black so that no two edges consecutive in $\Gamma$ are painted the same colour. We can assume without loss of generality that the edge $e$ is black.
\begin{enumerate}
\item Paint $\tilde{e}$ red. We can suppose without loss of generality that $i \neq 1$: hence, exactly one of $\{v_{i-1}, v_i\}$ and $\{v_i, v_{i+1}\}$ is black. This means that exactly one of the two paths going from $v_i$ to $v_1$ along $C_1$ has its first edge painted black: let $w$ be this path. We now need to define a path $w'$ going from $v'_1$ to $v'_j$.
\begin{itemize}
\item If $j \neq 1$, exactly one of $\{v'_{j-1}, v'_j\}$ and $\{v'_j, v'_{j+1}\}$ is black. Applying the same reasoning as before, let $w'$ be the path going from $v'_1$ to $v'_j$ along $C_2$ having its last edge painted black. 
\item If $j = 1$ and the last edge of $p$ is red, let $w' = \{v'_1 \rightarrow v'_2 \rightarrow \ldots \rightarrow v'_{2k+1} \rightarrow v'_1\}$ (in other words, the whole cycle $C_2$); if the last edge of $p$ is black, let $w'$ be the empty path in $v'_1$.
\end{itemize}
Let $\Gamma' = \{v'_j \xrightarrow{\tilde{e}} v_i \xrightarrow{w} v_1 \xrightarrow{p} v'_1 \xrightarrow{w'} v'_j\}$. By construction, $\Gamma'$ is an even closed walk, since its edges are alternately red and black and the first and the last one have different colours. Moreover, it is easy to check that $\Gamma'$ is primitive either of type (ii) (when $j=1$ and the last edge of $p$ is red) or of type (i) (in all other cases); hence, $\Gamma' \in \Gr_G$. Finally, since $\tau$ is a lexicographic term order, to get who the leading monomial of $b_{\Gamma'}$ is we just have to identify the leading edge of $\Gamma'$: since $\tilde{e} \prec_{\tau} e$ by hypothesis and the rest of the edges of $\Gamma'$ are edges of $\Gamma$, we get that the leading monomial of $b_{\Gamma'}$ is the one formed by black edges. Since the black edges of $\Gamma'$ all lie in $\Gamma$, we have that $in_{\tau}(b_{\Gamma'})$ divides $in_{\tau}(b_{\Gamma})$. Since $b_{\Gamma} \neq b_{\Gamma'}$, we have that $b_{\Gamma} \notin \RGB_{\tau}(I_G)$.
\item Paint $\tilde{e}$ red and define $w'$ in the same way as in part (a). Let $w$ be defined the following way:
\begin{itemize}
\item if $\tilde{e} = \{v_i, v'_j\}$, let $w := \{v_{i+1} \rightarrow v_{i+2} \rightarrow \ldots \rightarrow v_{2k+1} \rightarrow v_1\}$ (if $i = 2k+1$, $w$ is the empty path); 
\item if $\tilde{e} = \{v_{i+1}, v'_j\}$, let $w := \{v_{i} \rightarrow v_{i-1} \rightarrow \ldots \rightarrow v_2 \rightarrow v_1\}$ (if $i = 1$, $w$ is the empty path).
\end{itemize}
Let \[\Gamma' :=  \left\{ \begin{array}{lll} \{v_{i+1} \xrightarrow{w} v_1 \xrightarrow{p} v'_1 \xrightarrow{w'} v'_j \xrightarrow{\tilde{e}} v_i \xrightarrow{e} v_{i+1}\} & \textrm{if} & \tilde{e} = \{v_i, v'_j\} \\ \{v_i \xrightarrow{w} v_1 \xrightarrow{p} v'_1 \xrightarrow{w'} v'_j \xrightarrow{\tilde{e}} v_{i+1} \xrightarrow{e} v_i\} & \textrm{if} & \tilde{e} = \{v_{i+1}, v'_j\}\end{array} \right.\]
Reasoning the same way as in part (a), we get that $\Gamma'$ is an even closed walk; moreover, it can be easily checked that $\Gamma'$ is primitive either of type (ii) (when $v'_1$ belongs to $\tilde{e}$ and the last edge of $p$ is red or when $v_1$ belongs to $\tilde{e}$, with no restrictions on the colour of the last edge of $p$) or type (i) (in all other cases), hence $b_{\Gamma'} \in \Gr_G$. For the same reasons as in part (a), we get that $b_{\Gamma} \notin \RGB_{\tau}(I_G)$. \qedhere
\end{enumerate}
\end{proof}
\end{lemma}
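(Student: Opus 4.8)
The plan is to certify that $b_{\Gamma}$ cannot belong to $\RGB_{\tau}(I_G)$ by producing a \emph{different} element $b_{\Gamma'}$ of the Graver basis $\Gr_G$ whose leading monomial divides $in_{\tau}(b_{\Gamma})$. Because the leading terms of a reduced Gr\"obner basis minimally generate $in_{\tau}(I_G)$ (and none divides another), exhibiting such a $b_{\Gamma'}$ with $b_{\Gamma'} \neq b_{\Gamma}$ immediately forces $b_{\Gamma} \notin \RGB_{\tau}(I_G)$. Everything therefore reduces to building the right ``shortcut'' walk $\Gamma'$ and comparing the two leading terms.

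To set up the comparison, I would first invoke Remark~\ref{primitivepaint}: primitivity lets me two-colour the edges of $|\Gamma|$ (red and black) so that edges consecutive in $\Gamma$ always receive opposite colours, and the two monomials of $b_{\Gamma}$ are then precisely the product of the black edge-variables and the product of the red ones. Since $\tau$ is lexicographic and $e$ is the leading edge of $|\Gamma|$, the variable $y_e$ is the $\tau$-largest variable occurring in $b_{\Gamma}$, so the monomial containing $y_e$ is $in_{\tau}(b_{\Gamma})$; after painting $e$ black without loss of generality, $in_{\tau}(b_{\Gamma})$ becomes the product over the black edges of $\Gamma$.

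Next I would construct $\Gamma'$ as a detour that uses the chord $\tilde{e}$ to bypass the long tour around the bow-tie $\{C_1, p, C_2\}$: starting from an endpoint of $\tilde{e}$ on $C_1$, travel along the arc of $C_1$ whose first edge is black up to $v_1$, then along $p$, then along the appropriate arc of $C_2$ to the other endpoint of $\tilde{e}$, and finally close up through $\tilde{e}$, which I paint red. Painting $\tilde{e}$ red together with $\tilde{e} \prec_{\tau} e$ guarantees that $e$ remains the leading edge of $\Gamma'$, so $in_{\tau}(b_{\Gamma'})$ is again the black monomial of $\Gamma'$; since every black edge of $\Gamma'$ is a black edge of $\Gamma$, this monomial divides $in_{\tau}(b_{\Gamma})$, and as $\Gamma'$ is a genuinely shorter walk one has $b_{\Gamma'} \neq b_{\Gamma}$. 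The arcs and the colour of $\tilde{e}$ must be picked so that $\Gamma'$ alternates colours and its first and last edges differ in colour; this forces the case split according to whether the attaching index $j$ (resp.\ $i$) equals $1$, the colour of the last edge of $p$, and, in part (b), whether $\tilde{e}$ meets $v_i$ or $v_{i+1}$. In each branch one verifies that $\Gamma'$ is an even closed walk that is primitive of type (i) or (ii), hence a circuit by Proposition~\ref{circuits} and thus an element of $\Gr_G$.

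The hard part will be the bookkeeping across these sub-cases: one has to confirm that the chosen arcs really do make $\Gamma'$ colour-alternating with ends of opposite colour, and in particular that no edge gets traversed in both colours (which would destroy primitivity and invalidate the divisibility of the leading terms). I expect the hypothesis $\tilde{e} \neq \{v_1, v'_1\}$ to be used exactly here: it excludes the degenerate shortcut in which $\tilde{e}$ would coincide with the link already realised by $p$, so it is precisely what lets me rule out a collapse near $v_1$ and $v'_1$ and keep $\Gamma'$ an honest primitive walk of type (i) or (ii).
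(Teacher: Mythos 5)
Your proposal is correct and takes essentially the same approach as the paper's proof: the same two-colouring from Remark \ref{primitivepaint}, the same shortcut walk $\Gamma'$ closed up through $\tilde{e}$ (painted red, with the arcs chosen by the black-first-edge rule and the case split on $j=1$, the colour of the last edge of $p$, and which endpoint of $e$ the edge $\tilde{e}$ meets), and the same conclusion via $in_{\tau}(b_{\Gamma'})$ properly dividing $in_{\tau}(b_{\Gamma})$. The case-by-case verification you defer as bookkeeping is precisely what the paper carries out, and your reading of the hypothesis $\tilde{e} \neq \{v_1, v'_1\}$ is essentially right: the paper uses it to assume $i \neq 1$ without loss of generality, which is exactly what makes the black-first-edge arc at the attachment vertex well defined (at $v_1$ the two incident edges of $C_1$ share a colour, since $C_1$ is odd).
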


\renewcommand{\labelenumi}{(\roman{enumi})}

\begin{thm} \label{linres}
Let $G$ be a graph with linear resolution and let $[e_1, \ldots, e_m]$ be an ordering of the edges of $G$ such that $[M_{e_1}, \ldots, M_{e_m}]$ is a linear quotient ordering for $I(G)$ (such an ordering exists by Remark \ref{linquotedges}). Let $\tau$ be the lexicographic order on $K[y_1, \ldots, y_m]$ such that $y_1 \prec_{\tau} y_2 \prec_{\tau} \ldots \prec_{\tau} y_m$. Then the reduced Gr\"obner basis of $I_G$ with respect to $\tau$ is doubly squarefree.
\begin{proof}
By Proposition \ref{linquot}, the linear quotient property is equivalent to asking that each subgraph $\{e_1, \ldots, e_i\}$ is 1-step linear, that is to say gap-free by Remark \ref{gapfreeonesteplin}. Let $\Gamma$ be a primitive walk such that at least one of the two monomials of $b_{\Gamma}$ is not squarefree. This implies that $\Gamma$ is primitive of type (iii). Hence, by Lemma \ref{primitivewalks}, we know that the leading edge $e$ of $\Gamma$ lies into a bow-tie $\{C_1, p, C_2\}$. Let $G_{\preceq e}$ be the subgraph of $G$ obtained by considering all the edges $e'$ such that $e' \preceq_{\tau} e$. This means that $G_{\preceq e} = \{e_1, e_2, \ldots, e_s = e\}$; hence, $G_{\preceq e}$ is gap-free. Using the notation of Lemma \ref{mainlemma}, we have to consider two different cases.
\begin{itemize}
\item If $e \in p$, consider the edges $\{v_2, v_3\}$ and $\{v'_2, v'_3\}$. Since $G_{\preceq e}$ is gap-free, there exists an edge $\tilde{e} \in E(G)$ which links the edges we are considering and is such that $\tilde{e} \prec_{\tau} e$. By applying Lemma \ref{mainlemma}.(a), we get that $b_{\Gamma} \notin \RGB_{\tau}(I_G)$.
\item If $e = \{v_i, v_{i+1}\}$, consider the edge $\{v'_2, v'_3\}$. Reasoning as before, we discover the existence of an edge $\tilde{e} \in E(G)$ linking these two edges and having the property that $\tilde{e} \prec_{\tau} e$: hence, by applying Lemma \ref{mainlemma}.(b), we get that $b_{\Gamma} \notin \RGB_{\tau}(I_G)$.
\end{itemize}
This ends the proof.
\end{proof}
\end{thm}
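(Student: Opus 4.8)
The plan is to exploit the inclusion $\UGB(I_G) \subseteq \Gr_G$ from Proposition \ref{inclusions}: every element of $\RGB_{\tau}(I_G)$ is the binomial $b_{\Gamma}$ of some primitive walk $\Gamma$, so it suffices to show that a primitive walk whose binomial is \emph{not} squarefree can never survive in the reduced Gröbner basis. The first observation I would record is that non-squarefreeness forces $\Gamma$ to be of type (iii) in Lemma \ref{primitivewalks}. Indeed, in a walk of type (i) the vertices of the even cycle are all distinct, and in a walk of type (ii) the two odd cycles meet in a single vertex and hence share no edge; in both cases every edge of $|\Gamma|$ occurs exactly once in $\Gamma$, so $b_{\Gamma}$ is squarefree. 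Thus a non-squarefree $b_{\Gamma}$ can only arise from a type (iii) walk, where some path joining two cycles is traversed more than once.

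Next I would convert the linear quotient hypothesis into gap-freeness of initial segments. By Proposition \ref{linquot}, asking that $[M_{e_1}, \ldots, M_{e_m}]$ be a linear quotient ordering is equivalent to asking that each truncated edge ideal $(M_{e_1}, \ldots, M_{e_i})$ be $1$-step linear, i.e.\ that every initial subgraph $\{e_1, \ldots, e_i\}$ be $1$-step linear; by Remark \ref{gapfreeonesteplin} this means precisely that every such initial subgraph is gap-free. Since $\tau$ is the lexicographic order induced by $y_1 \prec_{\tau} \cdots \prec_{\tau} y_m$, for any edge $e = e_s$ the subgraph $G_{\preceq e}$ of all edges $e' \preceq_{\tau} e$ is exactly the initial segment $\{e_1, \ldots, e_s\}$, hence gap-free. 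This is the structural input that feeds Lemma \ref{mainlemma}.

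With these reductions in hand I would conclude as follows. Given a non-squarefree (hence type (iii)) primitive walk $\Gamma$, let $e$ be its leading edge; by Lemma \ref{primitivewalks} it lies in a bow-tie $\{C_1, p, C_2\}$, and after possibly exchanging $C_1$ and $C_2$ we may assume $e \in p$ or $e = \{v_i, v_{i+1}\}$ lies on $C_1$. Since $e$ is the leading edge of $|\Gamma|$, all edges of $\Gamma$ lie in the gap-free graph $G_{\preceq e}$; in particular the cycle edges $\{v_2, v_3\}$ of $C_1$ and $\{v'_2, v'_3\}$ of $C_2$ do (these exist because each odd cycle has length at least $3$). If $e \in p$, I would apply gap-freeness to the vertex-disjoint edges $\{v_2, v_3\}$ and $\{v'_2, v'_3\}$ to obtain a linking edge $\tilde{e}$ of the form $\{v_i, v'_j\}$ with $i,j \in \{2,3\}$, and invoke Lemma \ref{mainlemma}(a); if $e = \{v_i, v_{i+1\}} \subseteq C_1$, I would instead apply gap-freeness to the vertex-disjoint edges $e$ and $\{v'_2, v'_3\}$ to obtain a linking edge $\tilde{e}$ between them and invoke Lemma \ref{mainlemma}(b). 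In either case the conclusion is $b_{\Gamma} \notin \RGB_{\tau}(I_G)$, which finishes the argument.

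The point requiring care — though the genuinely hard work is already packaged inside Lemma \ref{mainlemma} — is verifying that the edge $\tilde{e}$ produced by gap-freeness meets the hypotheses of that lemma, namely $\tilde{e} \prec_{\tau} e$ \emph{strictly} and $\tilde{e} \neq \{v_1, v'_1\}$. Strictness holds because $\tilde{e} \in G_{\preceq e}$ gives $\tilde{e} \preceq_{\tau} e$, while $\tilde{e} \neq e$ is forced by the fact that $\tilde{e}$ joins a vertex of $C_1$ to a vertex of $C_2$ (the cycles being vertex-disjoint and $p$ being internally disjoint from them), whereas $e$ lies entirely in $p$ or in $C_1$. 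The inequality $\tilde{e} \neq \{v_1, v'_1\}$ holds since $\tilde{e}$ joins vertices indexed by $2$ or $3$ on at least one of the two cycles. Once these checks are recorded, all three types of the classification are accounted for and $\RGB_{\tau}(I_G)$ is doubly squarefree.
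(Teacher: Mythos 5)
Your proof is correct and follows essentially the same route as the paper's: the same reduction of the linear quotient hypothesis to gap-freeness of the initial segments $G_{\preceq e}$ via Proposition \ref{linquot} and Remark \ref{gapfreeonesteplin}, the same observation that a non-squarefree $b_{\Gamma}$ forces $\Gamma$ to be of type (iii), and the same case analysis ($e \in p$ versus $e$ on a cycle) feeding the gap-freeness linking edge into Lemma \ref{mainlemma}(a) and (b) respectively. The only difference is that you spell out details the paper leaves implicit (why types (i) and (ii) give squarefree binomials, and why $\tilde{e} \prec_{\tau} e$ strictly and $\tilde{e} \neq \{v_1, v'_1\}$), which is a welcome addition but not a different argument.
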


As a corollary we recover a result by Ohsugi and Hibi \cite{OHCompressed} about complete multipartite graphs:
\begin{cor}[\cite{OHCompressed}]
If $G$ is a complete multipartite graph, then there exists a doubly squarefree Gr\"obner basis of $I_G$.
\end{cor}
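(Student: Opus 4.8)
The plan is to reduce the statement entirely to Theorem \ref{linres}, so the only thing I need to establish is that a complete multipartite graph $G$ has a linear resolution; once this is known, Theorem \ref{linres} immediately furnishes a lexicographic term order for which $\RGB_{\tau}(I_G)$ is doubly squarefree. By Fr\"oberg's characterization (\cite{Fr}, the corollary following Proposition \ref{EGHPprop}), having a linear resolution is equivalent to the complement $G^c$ being chordal, so the whole corollary comes down to a purely combinatorial check on $G^c$.

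First I would describe $G^c$ explicitly. Writing the vertex set of $G$ as a disjoint union $V_1 \sqcup \cdots \sqcup V_t$ of the (independent) parts, the edges of a complete multipartite graph are exactly the pairs of vertices lying in \emph{distinct} parts. Passing to the complement reverses adjacency, so in $G^c$ two vertices are adjacent precisely when they lie in the \emph{same} part. Hence $G^c$ is the disjoint union of the complete graphs on $V_1, \ldots, V_t$.

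Next I would verify that such a disjoint union of cliques is chordal. Any induced cycle of $G^c$ is connected, hence contained in a single connected component, i.e.\ in one of the cliques on some $V_i$. But a complete graph contains no induced cycle of length at least $4$: in such a cycle two non-consecutive vertices would be non-adjacent, contradicting completeness. Therefore $G^c$ has no induced cycle of length $\geq 4$ and is chordal.

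Combining these observations, $G$ has a linear resolution by \cite{Fr}, and Theorem \ref{linres} then yields the desired doubly squarefree reduced Gr\"obner basis of $I_G$. I do not expect any genuine obstacle here: the content of the corollary is concentrated in Theorem \ref{linres}, and the remaining work—identifying $G^c$ as a union of cliques and noting that this forces chordality—is elementary.
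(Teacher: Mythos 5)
Your proposal is correct and follows exactly the paper's own route: identify $G^c$ as a disjoint union of cliques, conclude chordality (hence a linear resolution via Fr\"oberg), and apply Theorem \ref{linres}. You merely spell out the elementary combinatorial verifications that the paper leaves implicit.
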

\begin{proof}
The complementary graph of a complete multipartite graph is a disjoint union of cliques and hence is chordal. Applying Theorem \ref{linres} yields the thesis. \endproof
\end{proof}

\begin{rem}
In Theorem \ref{linres} we actually proved that $\RGB_{\tau}(I_G)$
does not contain any binomials corresponding to primitive walks of type (iii). This means in particular that $\RGB_{\tau}(I_G)$ consists entirely of circuits (and hence $I_G$ is generated by circuits, as one could have already noticed applying Theorem 2.6 in Ohsugi and Hibi's article \cite{OHToric}).
\end{rem}

\begin{thm} \label{onesteplin}
Let $G$ be a gap-free graph and order its edges the following way: $\{v_{i_1}, v_{i_2}\} \preceq \{v_{j_1}, v_{j_2}\}$ if and only if $v_{i_1}v_{i_2} \preceq_{\sigma} v_{j_1}v_{j_2}$, where $\sigma$ is an arbitrary graded reverse lexicographic order on $K[v_1, \ldots, v_n]$. Rename the edges so that $e_1 \succ e_2 \succ \ldots \succ e_m$. Let $\tau$ be the lexicographic order on $K[y_1, \ldots, y_m]$ such that $y_1 \succ_{\tau} y_2 \succ_{\tau} \ldots \succ_{\tau} y_m$. Then $in_{\tau}(I_G)$ is generated by squarefree elements.
\begin{proof}
Let $\Gamma$ be a primitive walk of $G$ such that $in_{\tau}(b_{\Gamma})$ is not squarefree and let $e = \{u_1, u_2\}$ be the leading edge of $\Gamma$ with respect to $\tau$. Then, since $\Gamma$ has to be of type (iii), by Lemma \ref{primitivewalks} there exists a bow-tie $\{C_1, p, C_2\}$ containing $e$. We will use the notation of Lemma \ref{mainlemma} to denote the edges of this bow-tie.
\begin{itemize}
\item If $e \in C_1$, then no edges of $C_2$ have vertices in common with $e$. In the following we will say that a vertex $v \in V(|\Gamma|)$ satisfies condition $(<)$ if \[v \prec_{\sigma} u_1, v \prec_{\sigma} u_2.\]
Note that, by definition of $\sigma$ and $\tau$, if an edge $\{w_1, w_2\} \in E(|\Gamma|)$ shares no vertices with $e$, then at least one of $w_1$ and $w_2$ must satisfy condition $(<)$. Since no edges of $C_2$ share vertices with $e$, any pair of consecutive vertices in $C_2$ must include a vertex satisfying condition $(<)$: since $C_2$ is odd, by pigeonhole principle we get that there exists an edge $e'$ of $C_2$ whose vertices both satisfy condition $(<)$.

Since $G$ is gap-free, there exists $\tilde{e} \in E(G)$ linking $e$ and $e'$: moreover, since both vertices of $e'$ satisfy condition $(<)$, one has that $\tilde{e} \prec_{\tau} e$. If $\tilde{e} \neq \{v_1, v'_1\}$ then, by Lemma \ref{mainlemma}.(b), we get that $b_{\Gamma} \notin \RGB_{\tau}(I_G)$. If $\tilde{e} = \{v_1, v'_1\}$, then $v_1 \in e$ and we have to consider two different cases.

\begin{itemize}

\item[$\circ$] If $p$ is made of an even number of edges, then $\Gamma' := \{C_1, p, -\tilde{e}\}$ is a primitive walk of type (ii) such that $in_{\tau}(b_{\Gamma'})$ divides $in_{\tau}(b_{\Gamma})$. Hence $b_{\Gamma} \notin \RGB_{\tau}(I_G)$.

\item[$\circ$] If $p$ is made of an odd number of edges, then consider $\Gamma' := \{C_1, \tilde{e}, C_2, -\tilde{e}\}$. By Proposition \ref{circuits}, $b_{\Gamma'}$ is a circuit and hence $\Gamma'$ is a primitive walk. Since $in_{\tau}(b_{\Gamma'})$ is squarefree and divides $in_{\tau}(b_{\Gamma})$, we get that $b_{\Gamma} \notin \RGB_{\tau}(I_G)$.

\end{itemize}

\item If $e \in p$, we have to discuss two different situations.

If $p$ is made of more than one edge, then at least one of the cycles $C_1$ and $C_2$ has no vertices in common with $e$ (let it be $C_1$ without loss of generality). Then, applying the same pigeonhole reasoning used in the previous case, we discover the existence of an edge $e'$ of $C_1$ whose vertices both satisfy condition $(<)$. Since $G$ is gap-free, there exists $\tilde{e}$ linking $e'$ and $\{v'_2, v'_3\}$. Since $\tilde{e} \prec_{\tau} e$ by construction, applying Lemma \ref{mainlemma}.(a) we get that $b_{\Gamma} \notin \RGB_{\tau}(I_G)$.

The last case standing is the one where $p = \{\{v_1, v'_1\}\} = \{e\}$. Let $\hat{C}_1 := \{v_2, v_3, \ldots, v_{2k+1}\}$, $\hat{C}_2 := \{v'_2, v'_3, \ldots, v'_{2{\ell}+1}\}$. If there exist two consecutive vertices belonging to either $\hat{C}_1$ or $\hat{C}_2$ and satisfying condition $(<)$, then we can apply Lemma \ref{mainlemma}.(a) to infer that $b_{\Gamma} \notin \RGB_{\tau}(I_G)$.
Suppose otherwise. Then condition $(<)$ is satisfied alternately: to be more precise, we have that the vertices of $\hat{C}_1$ (or $\hat{C}_2$) satisfying condition $(<)$ are either the ones with odd index or the ones with even index. We can suppose without loss of generality that $v_3, v_5, \ldots, v_{2k+1}, v'_3, v'_5, \ldots, v'_{2{\ell}+1}$ are the vertices in $\hat{C}_1 \cup \hat{C}_2$ satisfying condition $(<)$. Consider the edges $\{v_2, v_3\}$ and $\{v'_2, v'_3\}$. Since $G$ is gap-free, these edges are surely linked by some edge $\tilde{e}$: if one of $v_3$ and $v'_3$ belongs to $\tilde{e}$ we have that $\tilde{e} \prec_{\tau} e$ and hence, by Lemma \ref{mainlemma}.(a), we can conclude that $b_{\Gamma} \notin \RGB_{\tau}(I_G)$.
What happens if $\tilde{e} = \{v_2, v'_2\}$? If $\tilde{e} \prec_{\tau} e$ we are done for the same reason as before. Suppose $\tilde{e} \succ_{\tau} e$. Then, by definition of $\tau$, at least one of $v_1$ and $v'_1$ (call it $w$) must be such that $w \prec_{\sigma} v_2$ and $w \prec_{\sigma} v'_2$. Since $e$ is the leading edge of $|\Gamma|$, though, one has that $e \succ_{\tau} \{v_1, v_2\}$ and $e \succ_{\tau} \{v'_1, v'_2\}$, hence $v'_1 \succ_{\sigma} v_2$ and $v_1 \succ_{\sigma} v'_2$. This gives us a contradiction. \qedhere
\end{itemize}
\end{proof}
\end{thm}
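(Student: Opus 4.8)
The plan is to show that none of the minimal generators of $in_{\tau}(I_G)$ is a non-squarefree monomial. Since the leading terms of $\RGB_{\tau}(I_G)$ minimally generate $in_{\tau}(I_G)$, and since by the second inclusion in Proposition \ref{inclusions} every element of $\RGB_{\tau}(I_G)$ has the form $b_{\Gamma}$ for a primitive walk $\Gamma$, it suffices to prove the following: if $\Gamma$ is a primitive walk with $in_{\tau}(b_{\Gamma})$ not squarefree, then $b_{\Gamma} \notin \RGB_{\tau}(I_G)$. A variable $y_e$ occurs with exponent at least $2$ in $b_{\Gamma}$ precisely when the edge $e$ is traversed twice by $\Gamma$, and by Lemma \ref{primitivewalks} this forces $\Gamma$ to be of type (iii). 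Thus I only need to treat type (iii) walks, whose leading edge $e = \{u_1, u_2\}$ lies in some bow-tie $\{C_1, p, C_2\}$.

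The engine of the argument is a comparison lemma tailored to the fact that $\tau$ is induced by a graded reverse lexicographic order $\sigma$. Call a vertex $v$ \emph{small} if $v \prec_{\sigma} u_1$ and $v \prec_{\sigma} u_2$ (condition $(<)$). The first step is to show that every edge of $|\Gamma|$ sharing no vertex with $e$ has at least one small endpoint: if both endpoints $w_1, w_2$ of such an edge failed condition $(<)$, then both would be $\succ_{\sigma}$ the smaller of $u_1, u_2$, so that smaller endpoint would be the $\sigma$-least variable occurring in the two degree-$2$ edge monomials; it appears in $M_e$ but not in the other monomial, forcing $M_e \prec_{\sigma} M_{\{w_1, w_2\}}$ and contradicting that $e$ is the leading edge. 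The second step is a pigeonhole observation: in any odd cycle disjoint from $e$ the non-small vertices form an independent set, hence number at most $\ell$ out of $2\ell+1$, so two consecutive vertices are both small and yield an edge $e'$ of that cycle with both endpoints small. For such an edge any edge $\tilde{e}$ linking it to $e$ also satisfies $\tilde{e} \prec_{\tau} e$, again by the same $\sigma$-comparison.

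With these tools I would split according to the position of $e$. If $e \in C_1$, then $C_2$ is disjoint from $e$; the pigeonhole step yields an edge $e'$ of $C_2$ below $e$, gap-freeness supplies a linking edge $\tilde{e} \prec_{\tau} e$, and Lemma \ref{mainlemma}(b) gives $b_{\Gamma} \notin \RGB_{\tau}(I_G)$. If $e \in p$ and $p$ consists of more than one edge, then one of $C_1, C_2$ is disjoint from $e$, the pigeonhole step produces an edge $e'$ inside it, and gap-freeness links $e'$ to an edge of the other cycle, so Lemma \ref{mainlemma}(a) applies. The main obstacle lies in the boundary situations, since Lemma \ref{mainlemma} explicitly excludes the junction $\{v_1, v'_1\}$ of $p$. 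When $e \in C_1$ and the forced linking edge is $\tilde{e} = \{v_1, v'_1\}$, I would instead exhibit a smaller primitive walk by hand, splitting on the parity of the length of $p$: either $\{C_1, p, -\tilde{e}\}$ (a type (ii) walk) or $\{C_1, \tilde{e}, C_2, -\tilde{e}\}$ (a circuit of type 3 in Proposition \ref{circuits}), whose squarefree leading monomial divides $in_{\tau}(b_{\Gamma})$. The hardest case is $p = \{e\}$, a single edge: here smallness must alternate around each odd cycle, and after reducing to the linking edge $\{v_2, v'_2\}$ one has to rule out $\tilde{e} \succ_{\tau} e$ by playing the alternation pattern against the fact that $e$ is the leading edge, thereby reaching a contradiction on the $\sigma$-order of $v_1, v'_1, v_2, v'_2$. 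Assembling these cases completes the proof.
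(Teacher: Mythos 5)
Your proposal is correct and follows essentially the same route as the paper's own proof: reduction to type (iii) primitive walks via $\RGB_{\tau}(I_G) \subseteq \Gr_G$, the condition $(<)$ combined with the graded reverse lexicographic comparison, the pigeonhole argument in the odd cycle disjoint from the leading edge, the appeal to Lemma \ref{mainlemma}(a)/(b) away from the junction $\{v_1, v'_1\}$, the parity-of-$p$ construction of a smaller primitive walk when $\tilde{e} = \{v_1, v'_1\}$, and the final alternation contradiction when $p = \{e\}$. The only differences are cosmetic: you spell out the degrevlex justification that the paper leaves implicit, while the paper is more careful in the case $e \in p$, $|p| > 1$ by linking $e'$ specifically to $\{v'_2, v'_3\}$ (which guarantees $\tilde{e} \neq \{v_1, v'_1\}$), where you only say ``an edge of the other cycle''.
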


\begin{rem}
The proof of Theorem \ref{onesteplin} shows also that $\RGB_{\tau}(I_G)$ consists of circuits and hence (as we already knew by \cite[Theorem 2.6]{OHToric}) $I_G$ is generated by circuits. To see this, replace the hypothesis ``$\Gamma$ primitive walk such that $in_{\tau}(b_{\Gamma})$ is not squarefree'' with ``$\Gamma$ primitive walk of type (iii)'' and note that the only primitive walks of type (iii) that may appear in $\RGB_{\tau}(I_G)$ are bow-ties (more precisely, just those with a connecting path of length one). Since binomials associated with bow-ties are circuits by Proposition \ref{circuits}, we are done.
\end{rem}
\begin{rem}
In general, the construction appearing in Theorem \ref{onesteplin} does not necessarily yield a doubly squarefree reduced Gr\"obner basis of $I_G$. For instance, consider the gap-free graph $G$ with $6$ vertices and the following edges: \[\begin{split}&e_1 = \{1, 2\},~e_2 = \{1, 3\},~e_3 = \{2, 3\},~e_4 = \{1, 4\},~e_5 = \{3, 4\}, \\&e_6 = \{1, 5\},~e_7 = \{4, 5\},~e_8 = \{2, 6\},~e_9 = \{3, 6\},~e_{10} = \{5, 6\}.\end{split}\]
Note that the edges are ordered from the biggest to the smallest in a reverse lexicographic way according to the vertex order 1 > 2 > 3 > 4 > 5 > 6 (in the sense explained in the claim of Theorem \ref{onesteplin}).
Let $\tau$ be the lexicographic order on $K[y_1, \ldots, y_{10}]$ such that $y_1 \succ_{\tau} y_2 \succ_{\tau} \ldots \succ_{\tau} y_{10}$. Then \cocoa~computations yield \[\begin{split}\RGB_{\tau}(I_G) = \{&y_1y_{10} - y_6y_8,~y_1y_5 - y_3y_4,~y_1y_9 - y_2y_8,~y_5y_{10} - y_7y_9,~y_2y_7 - y_5y_6,\\&y_2y_{10} - y_6y_9,~y_3y_4y_{10} - y_5y_6y_8,~y_2y_5y_8 - y_3y_4y_9,~\boxed{y_3y_4y_7y_9 - y_5^2y_6y_8}\}.\end{split}\]
\end{rem}

\section*{Acknowledgements}
The author would like to thank his advisor Aldo Conca, Matteo Varbaro and the referee for their valuable comments and suggestions.

	\clearpage
	\phantomsection
	\addcontentsline{toc}{chapter}{\bibname}
	\nocite{*}
	\printbibliography
\end{document}